\newcommand{\farc}{\frac}
\setlist[itemize]{itemsep=-1mm}
\newtheorem{theorem}{Theorem}
\newtheorem{lemma}[theorem]{Lemma}
\newtheorem{prop}[theorem]{Proposition}
\newtheorem{corollary}[theorem]{Corollary}
\newcommand{\vertiii}[1]{{\Big\vert\kern-0.25ex\Big\vert\kern-0.25ex\Big\vert #1 
    \Big\vert\kern-0.25ex\Big\vert\kern-0.25ex\Big\vert}}
\newcommand{\R}{\mathbb{R}}
\newcommand{\cP}{\mathcal{P}}
\newcommand{\eps}{\varepsilon}
\newcommand{\e}{\eps}
\newcommand{\1}{\mathds{1}}
\newcommand{\Rm}{{\mathbb R}}
\numberwithin{equation}{section}
\numberwithin{theorem}{section}
\crefname{assumption}{assumption}{assumptions}
\crefname{theorem}{theorem}{theorems}
\crefname{lem}{lemma}{lemmas}
\crefname{cor}{corollary}{corollaries}
\crefname{prop}{proposition}{propositions}
\Crefname{theorem}{Theorem}{Theorems}
\crefname{conjecture}{conjecture}{conjectures}
\begin{document}
\title{The Bramson delay in the non-local Fisher-KPP equation}

%

\author{Emeric Bouin 
\footnote{CEREMADE - Universit\'e Paris-Dauphine, UMR CNRS 7534, Place du Mar\'echal de Lattre de Tassigny, 75775 Paris Cedex 16, France. E-mail: \texttt{bouin@ceremade.dauphine.fr}}\and  
Christopher Henderson \footnote{Department of Mathematics, University of Arizona, Tucson, AZ 85721. E-mail: \texttt{ckhenderson@math.arizona.edu}}\and
Lenya Ryzhik \footnote{Department of Mathematics, Stanford University, Stanford, CA 94305, E-mail: \texttt{ryzhik@math.stanford.edu}}}

\maketitle

\maketitle
\begin{abstract}
We consider the non-local Fisher-KPP equation modeling a population with individuals competing with each other for 
resources with a strength related to their distance, and obtain the asymptotics for the position of the 
invasion front starting from a localized population.  Depending on the behavior of the competition kernel at infinity, the location of the front is 
either $2t - ({3}/2)\log t + O(1)$, as in the local case, or $2t - O(t^\beta)$ for some explicit $\beta \in (0,1)$.  Our main tools here are a
local-in-time Harnack inequality and an analysis of the linearized problem with a suitable moving Dirichlet boundary condition.  Our analysis also yields,  for any $\beta\in(0,1)$, examples of Fisher-KPP type non-linearities $f_\beta$ such that the front for the   local
Fisher-KPP equation with reaction term $f_\beta$ is at $2t - O(t^\beta)$.
\end{abstract}

\begin{abstract}
Dans cet article, nous consid\'erons l'\'equation de Fisher-KPP non locale, qui mod\'elise la dynamique d'une population ou la force de comp\'etition pour les ressources d\'epend de la distance entre les individus. Nous obtenons une asymptotique pr\'ecise en temps long de la position d'une population qui est initialement localis\'ee en espace. Selon la d\'ecroissance \`a l'infini du noyau de comp\'etition, la position du front est soit $2t - ({3}/2)\log t + O(1)$, comme dans le cas de l'\'equation locale, soit $2t - O(t^\beta)$, pour un $\beta \in (0,1)$ calcul\'e explicitement. Les outils les plus importants utilis\'es dans cet article sont une version locale en temps d'une in\'egalit\'e de Harnack parabolique ainsi qu'une analyse fine du probl\`eme lin\'earis\'e avec une condition de bord de Dirichlet dynamique. Notre analyse donne aussi, pour tout $\beta\in(0,1)$, des exemples de non-lin\'earit\'es de type Fisher-KPP pour lesquelles le front se trouve en $2t - O(t^\beta)$.
\end{abstract}
\noindent{\bf Key-Words:} {Reaction-diffusion equations, Logarithmic delay, Parabolic Harnack inequality}\\
\noindent{\bf AMS Class. No:} {35K57, 35Q92, 45K05, 35C07}


\section{Introduction}

The Fisher-KPP equation 
\begin{equation}\label{sep2704}
u_t = u_{xx} + u(1-u)
\end{equation}
is one of the simplest models for population spreading, accounting for
a competition for resources. However, (\ref{sep2704}) only accounts for a
local competition between individuals. When this competition is non-local,
one is led to the non-local Fisher-KPP equation 
\begin{equation}\label{eq:nonlocal}
\begin{aligned} 
	&u_t - u_{xx} = u (1 - \phi \star u), ~~~~~&t>0,~x\in\R,\\
	&u(0,\cdot) = u_0.
\end{aligned}
\end{equation}
Here, $\phi$ is a probability density that represents the strength of the competition between individuals a given distance apart. Equation~\eqref{eq:nonlocal} has garnered much interest recently, mostly for two reasons. First, 
it does not admit a comparison principle, leading to inherent technical difficulties -- even proving a uniform upper bound on $u$ is 
non-trivial~\cite{HamelRyzhik}. Second, unusual behavior may occur, such as the existence of oscillating wave trains behind the front \cite{FayeHolzer,GenieysVolpertAuger,Gourley,NadinRossiRyzhikPerthame}.

Our interest is in the spreading of the solutions of (\ref{eq:nonlocal}) 
when the initial density $u_0$ is localized. 
To motivate our work, we recall the known results for the local 
Fisher-KPP equation~(\ref{sep2704}). 
Going back to the work of Bramson, it is known that if $u_0$  is compactly supported, 
the front of~$u$ is located at 
\begin{equation}\label{sep2702}
	X(t) =2t - \frac32\log t + s_0,
\end{equation}
where $s_0$ is a shift depending only on $u_0$~\cite{Bramson78, Bramson83}, with less precise asymptotics obtained earlier by
Uchiyama~\cite{Uchiyama}.  These proofs have been simplified in recent years~\cite{HNRR_Short,Roberts}, with some
refinements in~\cite{NRR1, NRR2}, and also extended to the 
spatially periodic case~\cite{HNRR_Periodic}. One may think of $\bar X(t)=2t$ as 
the position of a traveling wave, and $d(t)=(3/2)\log t$ as the delay due to the
fact that the initial condition $u_0$ is compactly supported, so that the
solution lags behind the traveling wave.  

In  the non-local case considered
in the present paper, we show that the front position depends on the rate of decay
of the kernel~$\phi$ at infinity. 
When $\phi$ decays fast enough, solutions of (\ref{eq:nonlocal}) spread as those
of the local equation: the front is 
at a position as in (\ref{sep2702}), up to a constant order error. However,  
when $\phi$ decays slowly, and the competition at large distances is relatively 
strong, the delay behind the traveling wave position $2t$ 
is not logarithmic but algebraic, of the order $O(t^\beta)$,  
with~$\beta$ depending only on the rate of decay of~$\phi$.


We now make our assumptions more precise.  First, we assume that $\phi$ 
is an even, continuous, and bounded probability density:
\begin{equation}\label{eq:phi1}
\int_\R \phi(x)dx = 1,~~\hbox{ and \quad $\phi(x) = \phi(-x) \text{ for all } x \in \R$}.
\end{equation}
In addition, $\phi$ has some ``mass'' near the origin, that is, there exists $\sigma_\phi > 0$ such that
\begin{equation}\label{eq:phi2}
	\phi(x) \geq \sigma_\phi \1_{[-\sigma_\phi,\sigma_\phi]}.
\end{equation}
The behavior of $u$ depends strongly on the tail behavior of $\phi$.  Here we make two different  assumptions, that are helpful for 
the upper and lower bounds, respectively.  The first assumption, an upper bound on the tail of $\phi$,
is that there exists $A_\phi > 0$ and $r >1$ such that, for all $R \geq 1$,
\begin{equation}\label{eq:phi_upper}
	\int_R^\infty \phi(x) dx \leq A_\phi R^{-r + 1}.
\end{equation}
Sometimes we will need to complement this with a lower bound on the tail: for all $R\geq 1$, we have
\begin{equation}\label{eq:phi_lower}
	\int_R^\infty \phi(x) dx \geq A_\phi^{-1} R^{-r+1}.
\end{equation}
Roughly,~\eqref{eq:phi_upper} and \eqref{eq:phi_lower} mean that $\phi \sim x^{-r}$ for $x \gg 1$.
%

For the initial condition, we assume that $u_0$ is localized to the left of some point $x_0$:
\begin{equation}\label{eq:u_0}
    0 \leq u_0 \leq 1,\qquad 
    \exists x_0 \text{ such that } u_0(x) = 0 \text{ for all } x \geq x_0,
    \quad \text{ and } \quad
    \liminf_{x\to-\infty} u_0(x) > 0.
\end{equation}
We expect our results to hold when $u_0$ has ``fast" exponential decay, that is, $u_0(x) e^{(1+\epsilon)x} \to 0$ as~$x\to 0$ for some $\epsilon>0$, rather than compactly
supported on the right. However, we recall that the front position asymptotics for 
solutions of (\ref{sep2704}) with $u_0$ that has a sufficiently slow exponential 
tail on the right is 
different from (\ref{sep2702}), see~\cite{Bramson78, Bramson83}.

The main result of this paper is the following.
\begin{theorem}\label{thm:main_delay}
Suppose that $u$ satisfies~\eqref{eq:nonlocal} and~\eqref{eq:u_0} with $\phi$ satisfying~\eqref{eq:phi1},~\eqref{eq:phi2}, and~\eqref{eq:phi_upper}. 
If $r>3$, then the solution $u$ propagates with a logarithmic delay:
        \begin{equation}\label{eq:log_delay_below}
        \liminf_{t\to\infty} \inf_{x \leq 0} u\Big(t,2t - \frac{3}2 \log t +x\Big) > 0,
    \end{equation}
    and
\begin{equation}\label{eq:log_delay_above}
\lim_{L\to\infty} \limsup_{t\to\infty} \sup_{x\geq L}\, u\Big(t,2t - \frac{3}{2}\log t + x\Big) = 0.
\end{equation}
If $ r= 3$, then the solution $u$ propagates with a larger logarithmic delay:  there exists $S_\phi > s_\phi > 3/2$ such that
  \begin{equation}\label{eq:big_log_delay_below}
        \liminf_{t\to\infty} \inf_{x \leq 0} u\Big(t,2t - S_\phi \log t +x\Big) > 0,
    \end{equation}
    and
\begin{equation}\label{eq:big_log_delay_above}
\lim_{t\to\infty} \sup_{ x \geq 0}\, u\Big(t,2t - s_\phi \log t + x\Big) = 0.
\end{equation}
%
%
If $r \in (1,3)$, then the delay is algebraic:  there exists $C_\phi > 0$, depending only on $r$, $\sigma_\phi$, and $A_\phi$, such that
\begin{equation}\label{eq:alg_delay_below}
        \liminf_{t\to\infty}\inf_{x \leq 0} u\Big(t,2t - C_\phi t^\frac{3-r}{1+r} + x\Big) > 0,
    \end{equation}
    and, if additionally~\eqref{eq:phi_lower} holds, then there exists $c_\phi \in (0, C_\phi)$, depending only on $\sigma_\phi$, $r$, and $A_\phi$, such that
	\begin{equation}\label{eq:alg_delay_above}
        \lim_{t\to\infty} \sup_{x \geq 0} u\Big(t,2t - c_\phi t^\frac{3-r}{1+r} + x\Big) = 0.
    \end{equation}
\end{theorem}
As we discuss later in greater detail, heuristically, the competition term $\phi \star u$ acts on the scale~$t^\gamma$,
with~$\gamma = {2}/{(1+r)}$. Note that 
\begin{equation}\label{sep2724}
\frac{3-r}{1+r}=2\gamma -1,
\end{equation}
and that, when $r>3$,~$\gamma < 1/2$, which, in turn, suggests that the competition 
scale is smaller than the diffusive scale~$\sqrt t$.  
This is one way to see that there is a phase transition at $r=3$.   



As a by-product of our analysis, we also obtain results for the local Fisher-KPP equation 
\begin{equation}\label{sep2706}
u_t = u_{xx} + f(u).
\end{equation}
Let us assume that $f$ is of the KPP class: $f(u)/u$ is decreasing in $u$ near $0$, $f\in C^1$, and $f'(0)=1$. A natural question is whether
these assumptions are sufficient to ensure that the front location is given by the logarithmic Bramson correction in (\ref{sep2702}). 
We show, roughly, the following: if 
\[
1-\farc{f(u)}{ u} \sim \left(\log\left(\frac{1}{u}\right)\right)^{1-r} \hbox{ with $r>1$},
\]
then the conclusion of \Cref{thm:main_delay} holds, with  the logarithmic delay for $r\geq3$ and an algebraic delay of the 
order $O(t^{(3-r)/(1+r)})$  for $1<r<3$.
These non-linearities 
are not purely mathematical curiosities: they are regularly used in biology and are known as 
Gompertz models, see~\cite{dennis2006estimating} and the vast body of literature around it.   
The statement and proof of this result are contained in \Cref{sec:local_equation}.

Let us mention a few related works.  The model~\eqref{eq:nonlocal} considered here  was first introduced by 
Britton~\cite{Britton} and has a quite involved history, see the introduction of~\cite{BNPR} for a brief overview.  
The non-local term $\phi \star u$ has different effects depending on whether one is studying the behavior of~$u$ behind the front or at the front.  
Behind the front, there is a possible Turing instability of the steady state of the local Fisher-KPP equation $u \equiv 1$, which complicates the 
behavior.  For example, wave trains have been constructed 
by Faye and Holzer~\cite{FayeHolzer} and, in a related setting, 
in~\cite{NadinRossiRyzhikPerthame}.  Such wave trains have also been observed numerically by Genieys, 
Volpert, and Auger in~\cite{GenieysVolpertAuger}.  As a result, without finer assumptions on $\phi$, one cannot hope for a stronger result than 
the lower bounds in \Cref{thm:main_delay}.  As far as the behavior at the front is concerned, the main result in this direction is that traveling 
waves of speed $c=2$ exist~\cite{FangZhao, Gourley} and solutions to the Cauchy problem with compact initial data or which 
satisfy~\eqref{eq:u_0} propagate with speed $c(t)=2 + o(1)$~as~$t\to+\infty$~\cite{HamelRyzhik}.

As far as algebraic delays are concerned, we point to the work of Fang and Zeitouni~\cite{FangZeitouni} and Maillard and
Zeitouni~\cite{MaillardZeitouni}, as well as~\cite{NolenRyzhikRoquejoffre} where a Fisher-KPP model with a 
diffusivity that changes slowly in time was studied, and a delay, roughly,
of order $t^{1/3}$ was obtained. However, both the set-up and 
the mechanism for the large delay are quite different in these papers
than in the present work.  Finally, we also mention the recent paper of Ducrot~\cite{Ducrot} 
in which he constructs a class of non-linearities $f(x,u)$, which tend to $u(1-u)$ as $|x| \to \infty$, such that if the nonlinearity $u(1-u)$ in~(\ref{sep2704})
is replaced by~$f(x,u)$, 
then the front is at $2t - \lambda \log(t)$ for any $\lambda \geq 3/2$.

While in the final stages of preparing this paper, we learned of a very recent probabilistic study of the delay term  
by Penington~\cite{Penington}.  
In our notation, she obtains the log delay up to an error term $O(\log\log(t))$, 
when $r>3$,  and an algebraic delay $t^{(3-r)/(1+r) \pm \epsilon}$ for 
any $\epsilon>0$ for~$r \in (1,3)$.  Penington's assumptions on $\phi$ are the same as ours when $r \geq 3$. 
However, her assumptions are weaker when~$r\in (1,3)$:
the $R^{-(r-1)}$ term in~\eqref{eq:phi_upper} is replaced by $R^{-(r-1)/2}$, at the expense of a slightly less precise power in the correction. 
The proofs in~\cite{Penington} are probabilistic, involving the Feynman-Kac formula and an in-depth study of the trajectories of Brownian motion.  
Overall, our work and~\cite{Penington} are quite different and reveal different features of the equation.

\subsubsection*{Heuristics and methods of proof}

The upper bound \eqref{eq:log_delay_above} 
 is obtained by 
a rather direct adaptation of the arguments in~\cite{HNRR_Short}.  
Let us outline a heuristic argument leading to the upper 
bound~\eqref{eq:alg_delay_above} for~$r\in(1,3)$.  It also explains how 
%
%
%
the exponent ${(3-r)}/{(1+r)}$ comes about. Let the front have a delay~$d(t)$ behind $2t$, so that
\begin{equation}\label{sep2710}
    \inf_{x\leq 2t - d(t)} u(t,x) \geq \delta_0,
\end{equation}
with some $\delta_0>0$.
We expect that the solution looks like an exponential to the right of $x=2t-d(t)$ and until the ``front edge" at $x=2t+e(t)$:
\begin{equation}\label{sep2714}
u(t,x)\sim \exp\{ -(x - 2t + d(t))\},~~\hbox{for $x\in(2t - d(t),2t + e(t))$}.
\end{equation}
The diffusive Gaussian decay dominates the  
exponential ``traveling wave" decay for $x>2t+e(t)$. 
Using~\eqref{sep2710} and then~\eqref{eq:phi_lower}, one may estimate $\phi \star u(t,x)$ when $x \in (2t - d(t), 2t + e(t))$ as
\begin{equation*}
\phi \star u(t,x)
	\geq \delta_0 \int_{-\infty}^{2t - d(t)} \phi(x-y) dy
	\gtrsim (x - (2t - d(t)))^{1-r}
	\gtrsim (e(t) + d(t))^{1-r}.
\end{equation*}
Thus, in order for the exponential in (\ref{sep2714})  to be a super-solution to \eqref{eq:nonlocal} inside $(2t - d(t),2t + e(t))$, we need
\begin{equation}\label{sep2716}
(e(t) + d(t))^{1-r} \gtrsim d'(t).
\end{equation}
We also need the exponential to be above $u(t,x)$ at the front edge.   To control $u$ there, we use that, letting  $h = e^{-t} u$, $h$ is a sub-solution to the heat equation.  In other words,
\begin{equation*}
	h_t \leq h_{xx}
\end{equation*}
and, hence, for all $x \gg 1$,
\begin{equation}\label{eq:heat_subsolution}
	e^{-t}u(t,x) = h(t,x)
		\leq \int e^{-\frac{|x-y|^2}{4t}} u_0(y) dy
		\lesssim \frac{\sqrt t}{x} e^{-\frac{x^2}{4t}}.
\end{equation}
Thus, for $u$ to sit below the exponential super-solution at $x = 2t + e(t)$, we require
%
%
\begin{equation*}
\exp\Big\{t -\frac{(2t + e(t))^2}{4t} \Big\} \lesssim \exp\{-( e(t) +  d(t))\},
\end{equation*}
that is,
\begin{equation}\label{sep2720}
e(t)^2 \geq 4td(t).
\end{equation} 
Since $e(t)$ should be $o(t)$, we get 
\begin{equation}\label{sep2718}
\lim_{t \to + \infty}\frac{d(t)}{e(t)} = 0.
\end{equation}
Combining (\ref{sep2716}), (\ref{sep2720}) and (\ref{sep2718}) gives, for $t$ large,
\begin{equation*}
d'(t) \lesssim e(t)^{1-r} \lesssim t^{\frac{1-r}{2}} d(t)^{\frac{1-r}{2}},
\end{equation*}
and thus necessarily 
\[
d(t) \lesssim t^{{(3-r)}/{(1+r)}}.
\] 
We deduce also $e(t) \gtrsim t^\gamma$, with $\gamma$ as in (\ref{sep2724}).

A way to estimate the solution from below, to get the lower bounds,
is to study the linearized Fisher-KPP equation with 
a Dirichlet boundary condition at $2t + e(t)$, as in \cite{HNRR_Short}. 
The problem   that comes up after removing the exponential factor is
\begin{equation}\label{eq:self_similar_z_intro}
	\begin{aligned}
		&z_t = z_{xx} + e'(t)(z_x - z), ~~   t > 0, x > 0 ,\\
		&z(t,0) = 0.  
	\end{aligned}
\end{equation}
%
%
%
Once again, the case $r>3$ is treated similarly to \cite{HNRR_Short}. 
In particular, while the term $e'(t)z$ is important and is responsible for the $3/2$ pre-factor in the logarithmic correction, the
drift $e'(t)z_x$ is negligible. Roughly, we  estimate $z(t,x)$ at~$x \sim \sqrt{t}$, and use a ``tracing back to a shifted traveling wave'' argument, to construct a sub-solution 
for $u$. 
%
%

When $r < 3$, we choose $e(t) = t^\gamma$.  Since now $\gamma >1/2$, the drift $e'(t)z_x$ can no longer be neglected, and
the choice of the exact exponent $\gamma$ is necessary to get matching asymptotics.  
%
We explicitly construct a sub-solution of $u$ to estimate the solution at the far edge, and then perform a ``tracing back'' argument with a travelling wave. 

Lastly, in the case when $r=3$, the diffusive scale and the induced drift have the same order.   Here, the balance of these two scales causes a somewhat larger delay.

\subsubsection*{The local in time Harnack inequality}

The main tool that allows us to get ``reasonably sharp" asymptotics for the front position is a local-in-time Harnack inequality that is of an independent interest.
\begin{prop}\label{lem:Harnack}
Suppose that $u \in L^\infty([0,T]\times \R)$ is a non-negative function that solves 
\[
u_t = u_{xx} + c(t,x)u,
\]
on $[0,T]\times \R$ with $c \in L^\infty([0,T]\times \R)$ and $T>0$.  
Then, for any $p \in (1,\infty)$, there exist positive constants 
$\alpha$, $\beta$, and $C$, that depend only on $\|c\|_{L^\infty([0,T]\times \R)}$ and $p$, 
such that, for all $x, y \in \R$ and~$t \in (0,T]$, we have
\begin{equation}\label{sep2902}
u(T,x + y)
\leq C \|u\|_{L^\infty([t,T])\times \R}^{1-\frac{1}{p}} u(T,x)^{\frac{1}{p}} 
e^{\alpha t + \frac{\beta y^2}{t}}. 
\end{equation}
\end{prop}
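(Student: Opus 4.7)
The plan is to represent $u$ via the fundamental solution of the linear operator $\partial_s - \partial_\xi^2 - c$, and then interpolate between the Gaussian upper bound at $x+y$ and the Gaussian lower bound at $x$ by means of Hölder's inequality.

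First, since $c$ is bounded, the explicit functions $(s,\xi) \mapsto e^{\pm\|c\|_\infty(s-s_0)} G_{s-s_0}(\xi-\xi_0)$, with $G_\tau$ the standard heat kernel, serve respectively as super- and sub-solutions of $\partial_s - \partial_\xi^2 - c$, so that by comparison with a unit mass initially concentrated at $\xi_0$ one obtains the two-sided Gaussian bounds
\[
e^{-\|c\|_\infty \tau}\, G_\tau(\xi - \xi') \le \Gamma(s_0+\tau,\xi;\, s_0,\xi') \le e^{\|c\|_\infty \tau}\, G_\tau(\xi - \xi'),
\]
where $\Gamma$ denotes the fundamental solution. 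Combining the representation $u(T,\cdot) = \Gamma(T,\cdot;\, T-t,\cdot) * u(T-t,\cdot)$ with the factorization $u(T-t,z) = u(T-t,z)^{1/p}\, u(T-t,z)^{1-1/p}$ and bounding the second factor by $\|u\|_\infty^{1-1/p}$ yields
\[
u(T,x+y) \le e^{\|c\|_\infty t}\, \|u\|_\infty^{1-\tfrac{1}{p}} \int_\R G_t(x+y-z)\, u(T-t,z)^{1/p}\, dz.
\]

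The core step is Hölder's inequality with conjugate exponents $p$ and $q = p/(p-1)$, applied after the algebraic split
\[
G_t(x+y-z)\, u(T-t,z)^{1/p} = \frac{G_t(x+y-z)}{G_t(x-z)^{1/p}} \cdot \bigl[G_t(x-z)\, u(T-t,z)\bigr]^{1/p}.
\]
The factor $\int G_t(x-z)\, u(T-t,z)\, dz$ is controlled by $e^{\|c\|_\infty t}\, u(T,x)$ via the Gaussian lower bound, contributing $u(T,x)^{1/p}$. The remaining Gaussian integral $\int G_t(x+y-z)^q\, G_t(x-z)^{-q/p}\, dz$ simplifies dramatically because of the identity $q(p-1)/p = 1$: the coefficient of $(x-z)^2$ in the exponent equals $-1/(4t)$, independent of $p$, so the integral reduces to a genuine Gaussian convolution. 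Completing the square in $z$ delivers $\exp(y^2/[4(p-1)t])$ after taking the $1/q$-power, yielding $\beta = 1/[4(p-1)]$ and, from the two exponential prefactors, $\alpha = \|c\|_\infty(1 + 1/p)$.

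The main subtlety is precisely the choice of the Hölder split: an uneven split would make the Gaussian ratio $G_t(x+y-z)^q G_t(x-z)^{-q/p}$ non-integrable in $z$; the split above works for \emph{every} $p > 1$ exactly because $q(p-1)/p = 1$ keeps the quadratic coefficient in $z$ equal to $-1/(4t)$. The $p$-dependence of $\beta$ is consistent with the two degenerate limits: as $p \to \infty$, $\beta \to 0$ and the bound degenerates to the trivial $L^\infty$ estimate, while as $p \to 1^+$, $\beta \to \infty$, reflecting the absence of a pointwise Harnack inequality in this generality without an auxiliary $L^\infty$ input.
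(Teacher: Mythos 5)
Your proof is correct and follows essentially the same route as the paper's: reduce to the heat equation via the factors $e^{\pm\|c\|_\infty t}$ (equivalently, the two-sided Gaussian bounds on the fundamental solution), represent $u(T,x+y)$ through the heat kernel, and interpolate with H\"older's inequality between exponents $p$ and $q$. The only difference is in how the Gaussians are distributed in the H\"older split --- the paper writes $G=G^{s}G^{1-s}$ with $s=(p+1)/(2p)$ and then dominates $G(T,x+y-z)^{sp}$ pointwise by a constant times $e^{\beta y^2/T}G(T,x-z)$, whereas you place the ratio $G_t(x+y-z)/G_t(x-z)^{1/p}$ directly into the $L^q$ factor --- but both reduce to the same completion of the square and give constants of the same form.
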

This inequality is an indispensable tool to obtain ``reasonably sharp'' results for non-local problems.  We have used a less precise form of it to obtain the 
logarithmic delay for solutions of the cane toads equation in~\cite{BHR_Log_Delay}, and it has also been used to establish a precise lower bound on the propagation speed of solutions of a Keller-Segel-Fisher system~\cite{HamelHenderson}.
As far as we know,~\cite{BHR_Log_Delay} is the only other non-local context where a delay 
asymptotics has been established.
It allows us to bound solutions of the non-local Fisher-KPP equation
(\ref{eq:nonlocal})  in terms of the solutions of a local Fisher-KPP
equation with a local time-dependent nonlinearity~$g(t,u)$, that is logarithmic in $u$ (Gompertz type). This 
equation has inherent difficulties coming from the time dependence and the logarithmic behavior near zero, but it is much more tractable because it 
admits a comparison principle.
%
%
%


The rest of the paper is organized as follows. In  \Cref{sec:upperbound}, we present the proofs of the upper bounds \eqref{eq:log_delay_above} and \eqref{eq:alg_delay_above}.  \Cref{sec:lower_bound} is where the proofs of the lower bounds \eqref{eq:log_delay_below}, \eqref{eq:big_log_delay_below} and \eqref{eq:alg_delay_below} are given. In order to complete the proof of the lower bounds, some estimates on linearized problems with moving Dirichlet boundary conditions are obtained in \Cref{sec:estw} and \Cref{sec:estlinKPP}. 
In  \Cref{sec:local_equation}, we state and prove the result concerning the local KPP equation with logarithmic nonlinearity.
The Harnack inequality is proved in Section~\ref{sec:harnack}.

\subsection*{Acknowledgements}

The authors thank Nicolas Champagnat for the reference \cite{dennis2006estimating}.  EB was supported by ``INRIA Programme Explorateur".  LR was supported by NSF grants DMS-1311903 and DMS-1613603.
Part of this work was performed within the framework of the LABEX MILYON (ANR- 10-LABX-0070) of Universit\'e de Lyon, 
within the program ``Investissements d'Avenir'' (ANR-11- IDEX-0007) operated by the French National Research Agency (ANR). 
In addition, CH has received funding from the European Research Council (ERC) under the European Unions Horizon 2020 research and 
innovation program (grant agreement No 639638) and was partially supported by the NSF RTG grant~DMS-1246999 and DMS-1907853.

\section{Upper bounds on the location of the front}\label{sec:upperbound}

In this section, we prove the upper bounds  \eqref{eq:log_delay_above} 
and~\eqref{eq:alg_delay_above} in \Cref{thm:main_delay}. 

\subsection{The upper bound when $r > 3$}

The case $r > 3$ is very close to the local Fisher-KPP equation. The $({3}/{2}) \log t$ delay is the best case scenario -- in fact,
the delay has to be at least that large for any $r$, so the bound is a quite straightforward application of bounds  
obtained in~\cite{HNRR_Short}.

\begin{proof}[Proof of \eqref{eq:log_delay_above}]

Take $t_0 > 0$ to be determined later. Working in the moving frame with the logarithmic correction, the function 
\begin{equation*}
u_{\rm mov}(t,x) = u \Big(t, 2t - \frac{3}{2} \log\Big(1+ \frac{t}{t_0} \Big) + x\Big), 
\end{equation*}
satisfies 
\[\begin{aligned}
&(u_{\rm mov})_t \leq \Big( 2 - \frac32 \frac{1}{t+t_0} \Big)(u_{\rm mov})_x 
+ (u_{\rm mov})_{xx} + u_{\rm mov}, \qquad &&\text{ for all } t>0, x\in\R,\\
  &  u_{\rm mov}(0,x) = u_0(x), &&\text{ for all } x \in \R.
\end{aligned}\]
We construct a super-solution $\overline u$ as in~\cite{HNRR_Short}. 
Let $\overline v$ be the solution to the boundary value problem
\[\begin{aligned}
 &   \overline v_t = \Big( 2 - \frac32 \frac{1}{t+t_0} \Big)\overline v_{x} + \overline v_{xx} + \overline v,  \qquad &&\text{  for all } t>0 \text{ and } x>0, \\
   &     \overline v (t, 0) = 0,  &&\text{ for all } t>0,\\
     &    \overline v(0,x) = \1_{(0,2)}(x) &&\text{ for all } x > 0.
%
\end{aligned}\]
Then \cite[Lemma~2.1]{HNRR_Short} implies that, 
provided that $t_0$ is sufficiently large, there exists $A_0\geq 1$ 
such that for all $t \geq 0$, we have  
\begin{equation*}
\overline v(t,1) \geq A_0^{-1}.
\end{equation*}
We also have the following uniform bound on the solutions to (\ref{eq:nonlocal}).
\begin{lemma}\cite[Theorem~1.2]{HamelRyzhik}\label{lem:upper_bound}
Suppose that $u$ satisfies~\eqref{eq:nonlocal} with initial data $u_0$ 
satisfying~\eqref{eq:u_0}.  Then there exists $M>0$ such that,  
$u(t,x) \leq M$ for all $t > 0$ and $x \in \R$.
\end{lemma}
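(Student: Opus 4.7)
The plan is to combine a uniform bound on local spatial integrals of $u$ (coming from the non-local damping term) with the local-in-time Harnack inequality \Cref{lem:Harnack} to produce a pointwise $L^\infty$ estimate. Since there is no comparison principle, we proceed by a self-consistent bootstrap on $M(T) := \|u(T, \cdot)\|_\infty$.

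\emph{Local $L^1$ bound.} By \eqref{eq:phi}, we may choose $R, \delta > 0$ with $\phi(y) \geq \delta$ for $|y| \leq R$. Fix a smooth nonnegative cut-off $\chi \leq 1$ supported in $[-R, R]$ with $\chi \equiv 1$ on $[-R/2, R/2]$, and set $F(t,x) := \int \chi(y-x) u(t,y)\, dy$. Testing the equation against $\chi(\cdot - x)$ and integrating by parts gives
\[
F_t(t,x) = \int \chi''(y-x) u(t,y)\, dy + \int \chi(y-x) u(t,y)\bigl[1 - \phi \star u(t,y)\bigr]\, dy.
\]
For $y \in [x-R/2, x+R/2]$ one has $\phi \star u(t,y) \geq \delta F(t,x)$, since $[x-R/2, x+R/2] \subset [y-R, y+R]$ and $\chi \leq 1$. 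This yields a dissipative quadratic term $-\delta F(t,x)^2$. After absorbing the Laplacian term (by matching $|\chi''|$ to the size of $\chi$ or by iterating with overlapping cut-offs), one obtains a differential inequality of the form $F_t \leq CF - cF^2$. Standard ODE comparison together with $F(0,x) \leq 2R$ (from $u_0 \leq 1$) produces $\sup_{t \geq 0,\, x \in \R} F(t,x) \leq K$ with $K$ depending only on $\phi$. Splitting $\phi \star u = \int \phi(z) u(t, x-z)\, dz$ into unit slabs and using $\phi(z) \leq A_\phi(1+|z|)^{-r}$ with $r > 1$ then gives $\phi \star u \leq M_0$ uniformly, so $u$ satisfies $u_t = u_{xx} + c(t,x) u$ with $\|c\|_\infty \leq 1 + M_0$, placing us in the hypotheses of \Cref{lem:Harnack}.

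\emph{From local $L^1$ to $L^\infty$ via Harnack.} For $t \in [0, 1]$, the scalar $\bar u(t) := e^t$ is a classical super-solution to $u_t \leq u_{xx} + u$ (valid since $\phi \star u \geq 0$ and $u_0 \leq 1$), hence $M(t) \leq e$. For $T \geq 1$, pick $x^*$ nearly realizing $M(T)$ and set $M_*(T) := \sup_{s \leq T} M(s)$. Rearranging \Cref{lem:Harnack} on the window $[T-1, T]$ to lower-bound $u(T, x^* - y)$ in terms of $u(T, x^*)$ and $\|u\|_{L^\infty([T-1,T] \times \R)} \leq M_*(T)$ gives
\[
u(T, x^* - y) \geq c_0\, \frac{M(T)^p}{M_*(T)^{p-1}}, \qquad |y| \leq 1,
\]
with $c_0$ depending only on $M_0$ and $p$. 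Integrating over $|y| \leq 1$ and invoking the local $L^1$ bound,
\[
K \geq \int_{x^*-1}^{x^*+1} u(T, z)\, dz \geq \frac{2 c_0\, M(T)^p}{M_*(T)^{p-1}},
\]
which rearranges to $M(T)^p \leq C''\, M_*(T)^{p-1}$. Taking the supremum over $T \in [1, T_0]$ and combining with the short-time bound yields $M_*(T_0) \leq \max(e, C'')$ uniformly in $T_0$, which is the desired $L^\infty$ estimate.

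The main obstacle is the local $L^1$ step: closing the differential inequality $F_t \leq CF - cF^2$ requires controlling $\int \chi'' u$ by $F$ itself, which does not follow automatically from $\chi \leq 1$. Once this technical point is handled, the non-local damping supplies the dissipative mechanism, and the Harnack inequality — the key analytic tool proved later in the paper — cleanly converts the resulting integral bound into a pointwise one.
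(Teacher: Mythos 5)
The paper does not prove this lemma at all: it is quoted verbatim as \cite[Theorem~1.2]{HamelRyzhik}, and the introduction explicitly flags that ``even proving a uniform upper bound on $u$ is non-trivial.'' So you are reconstructing an external result. Your architecture --- a uniform sliding-window $L^1$ bound extracted from the quadratic non-local damping, followed by an upgrade to $L^\infty$ via \Cref{lem:Harnack} --- is the right shape, and is in the spirit of the cited proof. Your second step is essentially correct: the bootstrap $M(T)^p \leq C'' M_*(T)^{p-1}$ closes, because the Harnack constants depend only on $\|1-\phi\star u\|_\infty$ and $p$, the slab decomposition does convert the window-$L^1$ bound into $\|\phi\star u\|_\infty \leq M_0$ using $r>1$, and $M_*(T)$ is finite for each finite $T$ by local theory, so the self-improvement is legitimate.

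The gap is in the first step, and it is exactly the one you name at the end. Two distinct problems prevent the inequality $F_t \leq CF - cF^2$ from closing. First, $\int \chi''(y-x)u\,dy$ is supported where $\chi$ is small (near $\pm R$), so it is controlled by $\sup_{x'} F(t,x')$ via a covering argument but \emph{not} by $F(t,x)$; this forces you to work with $S(t) := \sup_x F(t,x)$. Second, and more seriously, the damping you extract at the point $x$ is $-\delta F(t,x)\int_{x-R/2}^{x+R/2} u$, and at a maximizer of $F(t,\cdot)$ there is no reason for $\int_{x-R/2}^{x+R/2} u$ to be comparable to $F(t,x)$: the mass can sit in the annulus $R/2 \leq |y-x| \leq R$ where $\chi$ is still of order one but the convolution lower bound $\phi\star u(t,y) \geq \delta F(t,x)$ (which itself needs $\phi \geq \delta$ on $[-2R,2R]$, not $[-R,R]$) no longer applies to the points carrying the mass. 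So even for $S(t)$ you do not obtain $S' \leq CS - cS^2$. This localization-versus-damping mismatch, together with the absence of a comparison principle, is precisely the non-trivial content of the Hamel--Ryzhik theorem, which handles it by a more careful argument (a contradiction scheme combined with Duhamel/heat-kernel estimates over unit time steps) rather than by differentiating a fixed cutoff integral. As written, your proof establishes the reduction ``window-$L^1$ bound $\Rightarrow$ $L^\infty$ bound'' but not the window-$L^1$ bound itself; citing \cite{HamelRyzhik}, as the paper does, remains necessary.
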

Let us now define $\bar u(t,x)$ as 
\begin{equation*}
\overline u(t,x) = M   \Big( \textbf{1}_{x \leq x_0} + \min\Big( 1 , A_0\overline v(t,x-x_0+1) \Big) \textbf{1}_{x \geq x_0}\Big),
\end{equation*}
where $M$ is as in \Cref{lem:upper_bound}.
By construction, $\overline u$ is a super-solution to $u_{\rm mov}$, and by our assumptions 
on $u_0$~\eqref{eq:u_0}, we also have $\overline u(0,x) \geq u_{\rm mov}(0,x)$ for 
all~$x\in\R$. In addition, \cite[Lemma~2.1]{HNRR_Short} implies that there exists
$T_0$ such that, for all $z$ and all $t \geq T_0$,
\begin{equation}\label{eq:chris5}
    \overline v(t,z) \leq A_0 z e^{-z}.
\end{equation}

We are now in a position to conclude the proof.  Indeed, as 
$u \leq \overline u$,
the upper bound in~\eqref{eq:chris5} implies
\begin{equation}\label{eq:c101}
\begin{split}
   	\limsup_{L\to\infty} \limsup_{t\to\infty}&\sup_{x\geq L} u\Big(t,2t - \frac32\log t + x\Big)
        = \limsup_{L\to\infty} \limsup_{t\to\infty}\sup_{x\geq L} u_{\rm mov}(t,x)\\
        &\leq \limsup_{L\to\infty} \limsup_{t\to\infty}\sup_{x\geq L} \overline u(t,x)
        \leq \lim_{L\to\infty} MA_0 L e^{-L} = 0,
\end{split}
\end{equation}
%
which concludes the proof.
%
%
\end{proof}

\subsection{The upper bound when $r = 3$}

In this section, we show how to derive the upper bound on the location of the front assuming the lower bound on the location of the front.  In other words, we prove~\eqref{eq:big_log_delay_above} assuming~\eqref{eq:big_log_delay_below}, which we prove in the next section.

\begin{proof}[Proof of~\eqref{eq:big_log_delay_above} assuming~\eqref{eq:big_log_delay_below}]

Our proof proceeds similarly as in the previous subsection.  Set $\tilde s_\phi < S_\phi$ to be determined.  Using~\eqref{eq:big_log_delay_below} and~\eqref{eq:phi_lower}, we find $L>0$ such that, for all $x\geq 0$ and $t\geq L$,
\begin{equation}\label{eq:phi_star_bound_r3}
	\phi \star u(t,x+2t - \tilde s_\phi \log(t))
		\geq \frac{1}{LA_\phi} \left(x + (S_\phi - \tilde s_\phi)\log(t) + L\right)^{-2}.
\end{equation}

Next, we use the following result that is proved in \Cref{sec:lemestpert}.
\begin{lemma}\label{lem:r3_supersolution}
	There exists $v$, $\tilde s_\phi > 3/2$, and $L$ such that
	\begin{equation}\label{eq:edge_supersolution}
	\begin{aligned}
		&v_t \geq v_{xx} + v(1 - \nu(t,x - (2t - \tilde s_\phi\log(t+t_0))), \qquad &t > L, x > 2t - s_\phi \log(t+t_0) + L,\\
		&v(L,x) \geq u(L,x), &x > 2L - \tilde s_\phi \log(L+t_0) + L,
	\end{aligned}
\end{equation}
	$v(t, L+2t - \tilde s_\phi \log(t)) \geq M+1$ for all $t \geq L$, and $v(t,x + 2t - \tilde s_\phi\log(t)) \to 0$ as $x\to\infty$ uniformly in $t\geq L$.
\end{lemma}

With \Cref{lem:r3_supersolution} in hand, we now conclude.  Notice that, \eqref{eq:phi_star_bound_r3} implies that $v$ is a super-solution of $u$ in $\{(t,x) \in [L,\infty)\times \R : x \geq 2t - \tilde s_\phi\log(t) + L\}$.  Let
\[
	\overline u(t,x)
		= \begin{cases}
			M+1, \qquad &\text{ if } x \leq 2t - s_\phi\log(t) + L,\\
			\min\{M+1,v(t,x)\} \qquad &\text{ if } x \geq 2t - s_\phi\log(t) + L.
		\end{cases}
\]
As in the previous case $r>3$, the comparison principle implies that $\overline u \geq u$ on $[L,\infty)\times\R$.  The result then follows taking $s_\phi \in (\tilde s_\phi, 3/2)$. 

\end{proof}

\subsection{The upper bound when $r \in (1,3)$}

In this section, we show how to derive the upper bound on the location of the front from the lower bound on the location of the front.  In other words, we prove \eqref{eq:alg_delay_above} assuming \eqref{eq:alg_delay_below}, which we prove in the next section.

\begin{proof}[Proof of the upper bound \eqref{eq:alg_delay_above} assuming the lower bound~\eqref{eq:alg_delay_below}]

Note that, by~\eqref{eq:c101}, we have
\begin{equation*}
\lim_{t\to\infty} \sup_{x \geq 2t + t^\gamma} u(t,x) = 0.
\end{equation*}
As a consequence, taking into account (\ref{sep2724}), it suffices to show that
\begin{equation*}
\lim_{t\to\infty} \sup_{x \in (2t - c_\phi t^{{2\gamma-1}},2t + t^\gamma ) } u(t,x) = 0.
\end{equation*}
We do this by creating a relevant super-solution to $u$ on the interval $(2t - c_\phi t^{2\gamma-1}, 2t + t^\gamma)$. 
Note that the constant $c_\phi$ is still to be determined at this stage.
Define, for any $T>0$ and $C_\phi$ as in~\eqref{eq:alg_delay_below}, the space-time domain (recall that $\gamma>1/2$ for~$1<r<3$):
\[
    \cP_{T} := \Big\{(t,x): t \in (T, \infty), x \in (2t - C_\phi t^{2\gamma-1}, 2t + t^\gamma)\Big\},
\]
and, for $(t,x) \in  \cP_{T}$, the function
\[
    \overline v(t,x) :=
        B\exp\Big\{ - \Big(x - 2t + 2c_\phi t^{2\gamma-1}\Big) \Big\}.
\]
On $\cP_{T}$, the function $\overline v$ satisfies
\begin{equation}\label{oct102}
    \overline v_t = \overline v_{xx} + \overline v \Big(1 - 2c_\phi (2\gamma -1)t^{\gamma(1-r)}\Big).
\end{equation}

The rest of the proof is devoted to showing that $u$ is, indeed, a subsolution to (\ref{oct102}) when the various constants above are suitably chosen: specifically, we show that
\begin{equation}\label{oct104}
u_t - u_{xx} - u(1 - 2c_\phi(2\gamma-1)t^{\gamma(1-r)})\le 0\hbox{ in 	$\cP_{T}$,}
\end{equation}
and
\begin{equation}\label{oct106}
u(t,x)\le \overline v(t,x),~~\hbox{ on $\partial\cP_{T}$.}
\end{equation}

First, we show that (\ref{oct104}) holds. It follows from  \eqref{eq:alg_delay_below} that
there exist $C_\phi$ and $\delta_\phi$, depending only on $\phi$, and $T_0$ such that, for all $t \geq T_0$,
\begin{equation}\label{eq:chris1}
    \inf_{x\leq 2t - C_\phi t^{{2\gamma-1}}} u(t,x) \geq \delta_\phi.
\end{equation}
Using~\eqref{eq:chris1}, we can estimate $\phi\star u$ from below,  for  $t \geq T_0$ and $x > 2t - C_\phi t^{2\gamma-1}$:
\begin{equation}\label{eq:chris3}
\begin{split}
    \phi \star u(t,x)
        &= \int_\R \phi(x-y) u(t,y) \, dy
        \geq \int_{-\infty}^{2t - C_\phi t^{2\gamma-1}} \phi(x-y) u(t,y) \, dy\\
        &\geq \delta_\phi \int_{-\infty}^{2t - C_\phi t^{2\gamma-1}} \phi(x-y) \, dy
        = \delta_\phi \int_{x - 2t + C_\phi t^{2\gamma-1}}^{+\infty} \phi(z) \, dz  \\   
        &\geq \delta_\phi A_\phi^{-1} \int_{x - 2t + C_\phi t^{2\gamma-1}}^{+\infty} z^{-r} \, dz
        = \frac{\delta_\phi}{A_\phi (r - 1)} \Big(x - 2t + C_\phi t^{2\gamma-1}\Big)^{1-r}.
\end{split}
\end{equation}
Note that, as $r>1$, we have
\[
	2\gamma-1
	=\frac{3-r}{1+r}
	= \gamma + \frac{1-r}{1+r} < \gamma.
\]
Further increasing $T$, if necessary, the right-hand side in (\ref{eq:chris3}) can be estimated, for $t\ge T$, as 
\begin{equation}\label{eq:chris3a}
\begin{aligned}
\frac{\delta_\phi}{A_\phi (r - 1)} &\Big(x - 2t + C_\phi t^{2\gamma-1}\Big)^{1-r}
	\geq \frac{\delta_\phi}{A_\phi (r - 1)} \Big(t^\gamma + C_\phi t^{2\gamma-1} \Big)^{1-r}\\
&= \frac{\delta_\phi}{A_\phi (r - 1)} \Big( 1+ C_\phi t^\frac{1-r}{1+r} \Big)^{1-r} t^{(1-r)\gamma} 
\geq \frac{\delta_\phi}{A_\phi (r - 1)} \Big( 1+ C_\phi T^\frac{1-r}{1+r} \Big)^{1-r} t^{(1-r)\gamma} \\
& \geq \frac{2^{1-r} \delta_\phi}{A_\phi (r - 1)}t^{(1-r)\gamma} \geq 2c_\phi (2\gamma-1) t^{\gamma(1-r)},
\end{aligned}
\end{equation}
as long as $c_\phi$ is sufficiently small. Now, (\ref{oct104}) follows from (\ref{eq:nonlocal}), \eqref{eq:chris3} and~\eqref{eq:chris3a}. 
%
%
%
%
%
%
%
%
%
%
%
%

To show (\ref{oct106}), first, we consider the right spatial boundary $x=2t+t^\gamma$, $t\ge T$.  
As this point is at the far edge of the front, it is natural to use the linearized problem 
\[
\begin{aligned}
   & \overline u_t = \overline u_{xx} + \overline u,~~t>0, x\in\R,\\
   & \overline u(t=0,\cdot) = u_0.
\end{aligned}
\]
Then, with $x_0$ as in \eqref{eq:u_0}, we can write for $t\geq T$:
\begin{equation}\label{eq:chris2}
\begin{aligned}
    u(t,2t + t^\gamma)
        &\leq \overline u(t,2t + t^\gamma)
        = \frac{e^t}{\sqrt{ 4 \pi t}} \int_\R e^{-\frac{(2t + t^\gamma-y)^2}{4t}} u_0(y)dy
        \leq \frac{e^t}{\sqrt{ 4 \pi t}} \int_{-\infty}^{x_0} e^{-\frac{(2t + t^\gamma-y)^2}{4t}} dy\\
        &= \frac{e^t}{\sqrt{\pi}} \int_{\frac{2t + t^\gamma-x_0}{2\sqrt{t}}}^{+\infty} e^{-y^2} dy
        \leq \frac{Ce^t \sqrt {t}}{2t + t^\gamma - x_0} e^{-\frac{(2t + t^\gamma - x_0)^2}{4t}} 
        \\
        &\leq C_0 \exp\Big\{- t^{\gamma} - \frac14 t^{2\gamma -1}\Big\} 
        \leq B \exp\Big\{- t^\gamma - 2c_\phi t^{2\gamma-1}\Big\} =  \overline v(t,2t + t^\gamma),
\end{aligned}
\end{equation}
so long as $B \geq C_0$. Above, we have increased $T$ and decreased $c_\phi$ if necessary.  The constant $C_0$ depends only on $\gamma$ and~$x_0$.
%
Thus,  (\ref{oct106}) holds at $x=2t+t^\gamma$ for all $t\geq T$ as long as $B \geq C_0$.

At the left boundary $ x = 2t - C_\phi t^{2\gamma-1} $, we have
\begin{equation}\label{eq:chris8}
\overline v(t,2t - C_\phi t^{2\gamma-1})= B \exp\Big\{ (C_\phi - 2c_\phi) t^{2\gamma-1} \Big\} 
 \geq M \geq u\Big(t, 2t - C_\phi t^{2\gamma-1}\Big),
\end{equation}
as long as $2c_\phi \leq C_\phi$ and $B \geq M$. Here, $M$ is the upper bound in \Cref{lem:upper_bound}.

Lastly, we check that (\ref{oct106}) holds at $t=T$, for $2T - C_\phi T^{2\gamma-1}\le x\le 2T + T^\gamma$: 
\[    \overline v(T, x)
        = B \exp \Big\{ - \Big(x - 2T + 2 c_\phi T^{2\gamma-1}\Big)\Big\} \geq B \exp \Big\{ - T^\gamma - 2c_\phi T^{2\gamma-1}\Big\}.\]
As long as $B \geq M \exp\Big\{T^\gamma + 2c_\phi T^{2\gamma-1}\Big\}$, we have that, for all $x \in [2T - C_\phi T^{2\gamma-1}, 2T + T^\gamma]$
\begin{equation}\label{eq:chris9}
    \overline v(T, x)
        \geq M \geq u(T, x),
\end{equation}
and (\ref{oct106}) holds on all of $\partial\cP_T$. 

It follows from (\ref{oct104}) and (\ref{oct106}) that, with $T$ and $B$ sufficently large, and $c_\phi$ sufficiently small, we have
%
%
%
%
%
%
%
%
%
\[
    \lim_{t\to\infty} \sup_{x \geq 2t - c_\phi t^{2\gamma - 1}} u(t,x)
        \leq \lim_{t\to\infty} \sup_{x \geq 2t - c_\phi t^{2\gamma - 1}} \overline v(t,x)
        \leq \lim_{t\to\infty} B \exp\Big\{- \Big(2c_\phi -  c_\phi \Big) t^{2\gamma -1}\Big\} = 0,
\]
which finishes the proof of the upper bound.
\end{proof}

\section{Lower bounds on the location of the front}\label{sec:lower_bound}



The proofs of the lower bounds in \Cref{thm:main_delay} are much more involved. They hinge on estimating~$\phi \star u$ in terms 
of  $u$ in a local way, 
and then deriving precise heat kernel type estimates on the resulting local  equation. 

\subsection{Estimating the non-local term by a local counterpart}

To begin, we estimate the convolution term $\phi \star u$ in terms of $u$ under the assumptions of \Cref{thm:main_delay}.  
The assumptions of these two theorems differ only in the range of $r$.  In this section, we assume only that $r>1$ so our 
computations apply to all cases.

\begin{lemma}\label{lem:estphistaru}
There exists $C_{\rm conv}>0$, depending only on $\phi$, such that, for all $t \geq 1$ and all $x \in \R$,
\begin{equation}\label{eq:phi_u}
    \phi \star u(t,x)
        \leq C_{\rm conv} \max\Big\{1, \Big[ \frac{1}{t} \log\Big( \frac{M}{u(t,x)}\Big)\Big]^{\frac{r-1}{2}}\Big\} \log\left(\frac{M}{u(t,x)}\right)^{1-r}. 
\end{equation}
\end{lemma}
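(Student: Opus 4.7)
The plan is to apply the local-in-time Harnack inequality of \Cref{lem:Harnack} directly to $u$, viewed as a solution of $u_t = u_{xx} + c(t,x)u$ with $c := 1 - \phi\star u$.  Since $0 \leq u \leq M$ by \Cref{lem:upper_bound}, the potential $c$ is bounded in $L^\infty$ by $1+M$, so the constants $\alpha,\beta,C$ produced by \Cref{lem:Harnack} (with $p=2$, say) depend only on $M$.  Write $\ell := \log(M/u(t,x))$, so that the target becomes $\phi\star u \leq C_{\rm conv}\max\{1,(\ell/t)^{(r-1)/2}\}\ell^{1-r}$.  If $\ell \leq 1$, the trivial bound $\phi\star u\leq M$ already gives the claim, so I may assume $\ell\geq 1$.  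For such $\ell$, \Cref{lem:Harnack} supplies, for every $s\in(0,t]$ and $y\in\R$, the pointwise estimate
\[
    u(t,x+y) \leq CM\exp\Big\{-\frac{\ell}{2} + \alpha s + \frac{\beta y^2}{s}\Big\}.
\]

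Next I would split the convolution at some radius $R>0$ as $\phi\star u(t,x) \leq I_1 + I_2$, where
\[
    I_1 := \sup_{|y|\leq R} u(t,x+y), \qquad I_2 := M\int_{|y|>R}\phi(y)\,dy,
\]
and the tail estimate in \eqref{eq:phi} directly gives $I_2 \leq CM R^{1-r}$.  The crux of the argument is to choose $s$ and $R$ so that the Harnack bound on $I_1$ is negligible compared to $I_2$; the right choice splits naturally into two regimes, depending on whether $\ell$ is comparable to $t$ or much larger.  In the first regime, $\ell \leq 8\alpha t$, I take $s = \ell/(8\alpha)\in(0,t]$ and $R = c\ell$ with $c$ small enough that $64\alpha\beta c^2 \leq 1$; then $\alpha s + \beta R^2/s \leq \ell/4$, so $I_1 \leq CMe^{-\ell/4} \leq C\ell^{1-r}$, while $I_2 \leq CM\ell^{1-r}$, yielding $\phi\star u\leq C\ell^{1-r}$.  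This matches the claim because $(\ell/t)^{(r-1)/2}$ is bounded in this regime.  In the second regime, $\ell > 8\alpha t$, I take $s = t$ and $R = \sqrt{\ell t/(8\beta)}$, so that both $\alpha t$ and $\beta R^2/t$ are at most $\ell/8$; again $I_1 \leq CMe^{-\ell/4}$, and now $I_2 \leq CM(\ell t)^{(1-r)/2}$.  The elementary identity $(\ell t)^{(1-r)/2} = (\ell/t)^{(r-1)/2}\ell^{1-r}$ then produces exactly the claimed upper bound.

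The only delicate part is the bookkeeping of constants: one must verify that the correction $\alpha s + \beta y^2/s$ arising in the Harnack estimate can be pushed below $\ell/4$ simultaneously with the chosen $R$, in both regimes.  Beyond this, the argument uses nothing more than the Harnack inequality, the uniform upper bound of \Cref{lem:upper_bound}, and the polynomial tail decay of $\phi$ in \eqref{eq:phi}.
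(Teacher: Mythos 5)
Your proposal is correct and follows essentially the same route as the paper: apply the local-in-time Harnack inequality with $p=2$, split the convolution at a radius $R$, and choose the time parameter and $R$ in two regimes according to whether $\log(M/u(t,x))$ is at most of order $t$ (take both $\sim\log(M/u)$) or larger (take the time parameter equal to $t$ and $R\sim\sqrt{t\log(M/u)}$). The only differences are cosmetic — you treat the case $\log(M/u)\leq 1$ explicitly and bound $\int_{B_R}\phi$ by $1$ rather than by $CR$ — and the constant bookkeeping you flag does close in both regimes.
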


\begin{proof}
It is here that the local-in-time Harnack inequality is used crucially.  
Fix any time $t\geq 1$ and~$x,y\in\Rm$.   \Cref{lem:Harnack} 
with $p=2$ implies that there exists $\alpha>0$ so that  
\begin{equation}\label{eq:Harnack_application}
    u(t,x+y)
        \leq C \sqrt{u(t,x)} \exp\Big\{\alpha t' + \frac{\alpha y^2}{t'}\Big\},~~\hbox{ for all $t' \in (0,t]$, }.
\end{equation}
Above, we absorbed the uniform bound $M$ of $\|u\|_\infty$ given by \Cref{lem:upper_bound} into the constant $C$.  By increasing $M$ if necessary, we may assume that $M \geq \|u\|_\infty + 1$, which allows us to simplify notation in the sequel.  Using~(\ref{eq:phi_upper})
and~\eqref{eq:Harnack_application}, we obtain, for $R>0$ and $t' \in (0,t]$ to be determined,
\begin{equation}\label{oct110}
\begin{split}
    \phi \star u(t,x)
        &\leq \int_\R \phi(y) u(t,x-y) \, dy
        \leq C \int_{B_R} \phi(y) \sqrt{u(t,x)} e^{\alpha t' + \frac{\alpha R^2}{t'}} dy
            + M\int_{B_R^c} \phi(y)  dy\\
        &\leq  C \sqrt{u(t,x)}R \exp\Big\{\alpha t' + \frac{\alpha R^2}{t'}\Big\} + C M R^{-r + 1}.
\end{split}
\end{equation}
The constant $C$ changes line-by-line for the remainder of the proof and depends only on $\phi$ and $\alpha$. 

We now optimize the right-hand side in (\ref{oct110}) 
with respect to $t' \in (0,t]$ and $R > 0$. 
If $t' = R$, then
\begin{equation}\label{eq:phi_u1}
\begin{split}
    \phi \star u(t,x)
    	\leq C \sqrt{u(t,x)} R e^{2\alpha R} + C M R^{-r + 1}.
\end{split}
\end{equation}
To roughly balance the two terms in the right side of (\ref{eq:phi_u1}),
we choose 
\begin{equation}\label{oct112}
R = \frac{1}{8\alpha}\log\Big(\frac{M}{u(t,x)}\Big),
\end{equation}
the most important point being that $R$ should be of order $\log u$. As we have set $t'=R$ in (\ref{eq:phi_u1}), 
and we need to have $0\le t'\le t$, the choice (\ref{oct112}) is possible
only if
\begin{equation}\label{oct114}
t\geq \frac{1}{8\alpha}\log\Big(\frac{M}{u(t,x)}\Big).
\end{equation}
With this, we find, from (\ref{eq:phi_u1}): 
\begin{equation}\label{eq:phi_u1bis}
\begin{aligned}
    \phi \star u(t,x)
        &\leq C  \sqrt{u(t,x)} \log\Big(\frac{M}{u(t,x)}\Big) \exp\Big\{- \frac14 \log\Big(\frac{u(t,x)}{M}\Big) \Big\} 
        + C\Big(\log\Big(\frac{M}{u(t,x)}\Big)\Big)^{1-r}\\
        &\leq C\Big( u(t,x)^{1/4} \Big(\log\Big(\frac{M}{u(t,x)}\Big)\Big)^r + 1 \Big) \Big(\log\Big(\frac{M}{u(t,x)}\Big)\Big)^{1-r}
        \leq C \Big(\log\Big(\frac{M}{u(t,x)}\Big)\Big)^{1-r}.
\end{aligned}
\end{equation}
%
When (\ref{oct114}) does not hold, so that
\begin{equation}\label{oct116}
t\leq \frac{1}{8\alpha}\log\Big(\frac{M}{u(t,x)}\Big).
\end{equation}
we choose $t' = t$ and set 
\[
R = \Big(\frac{t}{8\alpha}\log\Big(\frac{M}{u(t,x)}\Big)\Big)^{1/2},
\]
in (\ref{oct110}),  leading to
\begin{equation}\label{eq:phi_u2}
\begin{split}
    \phi&*u(t,x)
        \leq C \sqrt{u(t,x)} \sqrt{t} \log^{1/2}\Big(\frac{M}{u(t,x)}\Big) 
        \exp\Big\{\alpha t + \frac18\log\Big(\frac{M}{u(t,x)}\Big)\Big\} + C\Big(t \log\Big(\frac{M}{u(t,x)}\Big)\Big)^{-\frac{(r-1)}{2}}\\
        &\leq C\sqrt{u(t,x)} \log\Big(\frac{M}{u(t,x)}\Big) \exp\Big\{\frac14\log\Big(\frac{M}{u(t,x)}\Big) \Big\} + C\Big(t \log\Big(\frac{M}{u(t,x)}\Big)\Big)^\frac{-(r-1)}{2} \\
        &\leq C  u(t,x)^{1/4}\log\Big(\frac{M}{u(t,x)}\Big) + C\Big(t \log\Big(\frac{M}{u(t,x)}\Big)\Big)^\frac{-(r-1)}{2}  \\
	& \leq C   \Big(1+u(t,x)^{1/4}\log\Big(\frac{M}{u(t,x)}\Big)\Big(t \log\Big(\frac{M}{u(t,x)}\Big)\Big)^\frac{r-1}{2}  \Big) 
	\Big(t \log\Big(\frac{M}{u(t,x)}\Big)\Big)^\frac{-(r-1)}{2}\\
        &\leq C \Big( u(t,x)^{1/4}\Big( \log\Big(\frac{M}{u(t,x)}\Big)\Big)^r + 1\Big) \Big(t \log\Big(\frac{M}{u(t,x)}\Big)\Big)^\frac{-(r-1)}{2}
        \leq C \Big(t \log\Big(\frac{M}{u(t,x)}\Big)\Big)^\frac{1-r}{2}.
\end{split}
\end{equation}
We used  (\ref{oct116}) several times above, as well as the upper bound $u(t,x)\le M$ in the last inequality.
The combination of~\eqref{eq:phi_u1bis} and~\eqref{eq:phi_u2} concludes the proof of the lemma.

\end{proof}

\subsection{A local equation and related bounds}

In view of \Cref{lem:estphistaru}, it is natural to introduce the following nonlinearity. Fix $r>1$, and 
for any positive constants $\theta_g$ and $A_g$, set $\Theta_g := \theta_g\exp\{-A_g^{1/(r-1)}\}$ and define $g \in C^{0,1}$ on
$(0,\Theta_g)$ as 
\begin{equation}\label{eq:g}
    g(t,u) :=  
        A_g \max\Big\{1, \Big[  \Big(t+A_g^\frac{1}{r-1}\Big)^{-1} \log\Big( \frac{\theta_g}{u }\Big)\Big]^{\frac{r-1}{2}}\Big\} 
        \log\Big(\frac{\theta_g}{u }\Big)^{1-r},~~\text{if } u \in\Big(0,\Theta_g \Big).
 \end{equation}
Outside  $[0,\Theta_g]$ we set $g(t,u)=0$ for $u<0$ and $g(t,u)=1$ for $u>\Theta_g$.
By construction,~$g(t,\cdot)$ is continuous. The ``$A_g^{1/(r-1)}$'' term 
in the second part of the maximum in the definition of $g$ does not affect the analysis in any way.  In fact, any other choice of $g$ that preserves the asymptotics as~$u$ and $t$ tend to zero would have the desired properties that we prove in the sequel.

We will make use of the  local   equation with a moving boundary at the front edge:
 \begin{equation}\label{eq:w}
 \begin{aligned}
 &w_t = w_{xx} + w(1 - g(t,w)),  &&\text{ in } \cP_{g,\gamma} := \Big\{(t,x) : t> 0, x > 2t + (t+t_0)^\gamma - t_0^\gamma \Big\},\\
 & w(t, 2t + (t+t_0)^\gamma - t_0^\gamma) = 0, \quad &&\text{ for all } t > 0,\\
 &       w(0,x) = w_0(x) && \text{ for all } x > 0.
    \end{aligned}
\end{equation}
The following proposition contains the crucial lower bounds for the solutions of (\ref{eq:w}) we will need. 
\begin{prop}\label{prop:g}
Assume that there exists $\delta_w >0$ and $x_w \in \R^+$ such that  the initial condition~$w_0(x)$ for (\ref{eq:w})
satisfies $w_0(x)\ge \delta_w 1_{(0,x_{w})}(x)  $.
\begin{enumerate}  
\item     If $r>3$, then there exists $X_w$ and $T_0$ such that if $x_w \geq X_w$ and $t_0 \geq T_0$ then there exists a positive constant $B_1$, depending only on $x_w$, $\delta_w$, $t_0$, $\gamma$, and $g$, such that, for all $t$ sufficiently large, we have
    \[
        w(t,2t + t^\gamma + \sqrt t ) \geq B_1^{-1} t^{-1}  e^{- \sqrt t - t^\gamma}.
    \]
\item  If $r=3$, then set $t_0 = 1$. There exists $\bar N>0$ such that if $x_w \geq 1$ then there exists a positive constant $B_2$, depending only on $\bar N$, $\delta_w$, $\gamma$, and $g$, such that, for all $t$ sufficiently large, we have
    \[
    	w(t,2t + 2 \sqrt t ) \geq B_2^{-1} t^{-1 - \bar N}e^{- \sqrt t}.
    \]
\item    If $r \in (1,3)$, then set $t_0 = 1$.  There exists $B_3 > 0$, depending only on $\delta_w$ and $g$, such that if~$x_w \geq 1$ then, for all $t \geq 1$, we have
    \[
		w(t,2t + t^\gamma + \sqrt t) \geq B_3^{-1} e^{- \sqrt t - t^\gamma - B_3 t^{2\gamma-1}}.
    \]

 \end{enumerate}   
    
\end{prop}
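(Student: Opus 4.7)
The overall plan is to exploit the fact that the Gompertz-type nonlinearity $g(t,u)$ vanishes quantitatively as $u\to 0^+$, so that near the moving boundary $g$ acts merely as a small perturbation of the linear Fisher--KPP operator. This reduces Proposition~\ref{prop:g} to sharp lower bounds for a linearized moving-boundary problem, which I would relegate to \Cref{sec:estw,sec:estlinKPP} as black boxes.

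\textbf{Change of frame.} Set $e(t):=(t+t_0)^\gamma-t_0^\gamma$ and $z(t,y):=e^y w(t,2t+e(t)+y)$. A direct computation turns~\eqref{eq:w} into
\[
 z_t = z_{yy} + e'(t)(z_y - z) - z\,g(t,e^{-y}z),\qquad y>0,\ z(t,0)=0,
\]
with $z(0,y)\geq \delta_w e^y\mathbf{1}_{(0,x_w)}(y)$. The three conclusions translate into pointwise lower bounds on $z(t,y_*(t))$ for explicit choices of $y_*(t)$, with the exponential and polynomial factors appearing in the statement arising after reverting to $w=e^{-y}z$.

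\textbf{Sub-solution strategy.} Since $g\geq 0$, $z$ is pointwise bounded above by the solution $Z$ of the linear problem obtained by deleting $-zg$, with the same boundary and initial data. The estimates of \Cref{sec:estw} supply a Gaussian-type envelope for $Z$, hence the bound $w(t,\cdot)\leq e^{-y}Z(t,y)$, which is very small for $y\gtrsim 1$. Feeding this into~\eqref{eq:g} yields a deterministic envelope $g(t,w)\leq\kappa(t,y)$ depending only on $(t,y)$. I would then define
\[
 \underline z(t,y):=\lambda(t)\,\hat Z(t,y),\qquad \lambda(t):=\exp\Bigl(-\int_0^t \bar\kappa(s)\,ds\Bigr),
\]
where $\bar\kappa(s)$ is the supremum of $\kappa(s,\cdot)$ over the relevant $y$-range and $\hat Z$ is the explicit linear solution for which \Cref{sec:estw,sec:estlinKPP} prove the sharp lower bounds. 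The only nontrivial check is the subsolution inequality for $\underline z$, which reduces to verifying $g(t,e^{-y}\underline z)\leq\bar\kappa(t)$; this is arranged by normalizing $\hat Z$ small enough that $\underline z\leq Z$ pointwise. Comparison, legal because the $z$-equation is local, then gives $z\geq\underline z$; undoing the change of variable produces the three claimed bounds.

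\textbf{Main obstacle.} The crux is matching the integrated perturbation $\int_0^t\bar\kappa(s)\,ds$ against the target exponent. For $r>3$ this integral is finite, giving only a constant loss absorbed into $B_1$; for $r\in(1,3)$ it is of order $t^{2\gamma-1}$, cleanly absorbed into $B_3$; but at the critical $r=3$ it diverges logarithmically in $t$, which is precisely the source of the $t^{-\lambda_\epsilon}$ loss in part~(2). A second delicate point, specific to part~(3), is that $e'(t)$ is no longer small, so the drift cannot be treated as a perturbation and the lower bound on $\hat Z$ requires an explicit Gaussian-drifted sub-solution accounting simultaneously for the moving Dirichlet boundary and the transport; this is the technical heart of \Cref{sec:estlinKPP}.
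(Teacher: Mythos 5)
Your proposal follows essentially the same route as the paper: reduce to the linearized Dirichlet problem (whose sharp bounds are deferred to the later sections), and absorb the Gompertz nonlinearity by multiplying the linear solution by a factor $a(t)=\exp\big(-\int_0^t\bar\kappa(s)\,ds\big)$, with the three regimes of $\int\bar\kappa$ (bounded for $r>3$, of order $t^{2\gamma-1}$ for $r\in(1,3)$, logarithmically divergent at $r=3$) producing exactly the three losses in the statement. Two minor corrections: in the paper the $t^{-\lambda_\epsilon}$ factor at $r=3$ actually comes from the perturbed principal eigenvalue of the self-similar linear operator, the log-divergent integral accounting only for the separate $t^{-\epsilon}$ factor; and for $r\in(1,3)$, since the explicit drifted-Gaussian sub-solution is not compactly supported, you additionally need the short-time heat-kernel spreading argument to fit it under $w$ at a positive time before invoking the comparison principle.
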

We delay the proof of this proposition until \Cref{sec:estw} and now continue the proof of the lower bounds of \Cref{thm:main_delay}. Having reduced the problem to estimating a delay for a local equation, we now transfer known bounds of \Cref{thm:main_delay} on $w$ to bounds on $u$.

\subsection{From a bound on $w$ to a bound on $u$}

Let us take $\theta_g = M$ and $A_g = C_{\rm conv}$ in the definition~\eqref{eq:g} of $g(t,u)$ and let the initial condition in~(\ref{eq:w})
be $w_0(x)= e^{-M } u_0(x)$.  A combination of   Lemma~\ref{lem:estphistaru} and \Cref{prop:g} 
implies that~$u$ is a super-solution for $w$ for $t\ge 1$.  Further, it follows from  considerations as in~\cite[Section~3]{BHR_Log_Delay}, 
that~$w(1,x) \leq u(1,x)$ for all $x\in \R$ due to the $e^{-M}$ pre-factor in the definition of $w_0$. 
The maximum principle then implies that $w(t,x) \leq u(t,x)$ for all $t \geq 1$ and all $x\in \R$.

Using the assumptions on the initial data~\eqref{eq:u_0}, we can, up to translating $u_0$, and thus $w_0$ as well, assume that $w_0$ satisfies the hypothesis $x_w = x_0 \geq X_w$ in \Cref{prop:g}.  Translating further and using parabolic regularity we may remove the dependence on $t_0$.  As a direct consequence, we have established:

\begin{corollary}\label{prop:estu}
    Suppose that $u$ satisfies~\eqref{eq:nonlocal} and~\eqref{eq:u_0} with $\phi$ satisfying~\eqref{eq:phi1},~\eqref{eq:phi2}, and~\eqref{eq:phi_upper}.  Then there exists~$S_0$, depending only on $u_0$ and $\phi$, such that:
  \begin{enumerate}  
\item     If $r>3$, then there exists a positive constant $B_1$, depending only on $u_0$ and $\phi$ such that, for all $t$ sufficiently large, we have
    \[
        u(t,2t + t^\gamma + \sqrt t - S_0  ) \geq B_1^{-1} t^{-1}  e^{-\sqrt t - t^\gamma}.
    \]
\item If $r=3$, then there exist positive constants $\bar N$, and $B_2$ such that, for all $t$ sufficiently large, we have
	\[
		u(t, 2t + 2\sqrt t - S_0)
			\geq  B_2^{-1} t^{-\left(1 + \bar N\right)} e^{-2 \sqrt t}.
	\]
\item    If $r \in (1,3)$, then there exists a positive constant $B_3$, depending only on $u_0$ and $\phi$, such that, for all $t \geq 1$, we have
    \[
	u(t,2t + t^\gamma + \sqrt t - S_0) \geq B_3^{-1} e^{- t^\gamma - B_3 t^{2\gamma-1}}.
    \]
 \end{enumerate}   
\end{corollary}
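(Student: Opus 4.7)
The plan is to transfer the lower bounds on $w$ given by \Cref{prop:g} to lower bounds on $u$ via a maximum-principle comparison on the moving-boundary domain $\cP_{g,\gamma}$. The key point is that with the parameters in \eqref{eq:g} chosen as suggested in the text, \Cref{lem:estphistaru} forces $u$ to be a super-solution of the equation satisfied by $w$, and an initial comparison plus vanishing boundary data for $w$ will then close the argument.

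First I would fix $\theta_g = M$ (the uniform bound of \Cref{lem:upper_bound}) and $A_g = C_{\rm conv}$ (the constant from \Cref{lem:estphistaru}), possibly replacing $A_g$ by a larger absolute multiple so as to absorb the mismatch between the $t^{-1}$ factor in \eqref{eq:phi_u} and the $(t+A_g^{1/(r-1)})^{-1}$ factor in \eqref{eq:g}. For $(t,x)$ with $u(t,x)\in(0,\Theta_g)$ and $t\ge 1$, \Cref{lem:estphistaru} then gives $\phi\star u(t,x)\le g(t,u(t,x))$, while for $u(t,x)\ge\Theta_g$ the inequality is trivial since $g\equiv 1$ and $\phi\star u\le M$. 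Consequently,
\[
u_t - u_{xx} \;=\; u(1-\phi\star u) \;\ge\; u(1-g(t,u)),
\]
so $u$ is a super-solution of the interior equation in \eqref{eq:w} for $t\ge 1$.

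Second, I would choose the initial data for $w$. Because $\phi\star u\le M$ and $u\ge 0$, the function $u$ satisfies $u_t - u_{xx}\ge -Mu$, and the standard heat-kernel representation together with~\eqref{eq:u_0} yields a lower bound of the form $u(1,x)\ge c_0\,\mathds{1}_{(-\infty,x_0+1)}(x)$ for some $c_0>0$. Shifting $u_0$ (and hence all of $u$) to the left by a constant $S_0$ large enough that (after composing with the initial-time translation to $t=1$) the support of this lower bound covers an interval of length at least $X_w$ (respectively $1$) to the right of the moving boundary at $t=0$, I would then set $w_0(x) := \delta_w \mathds{1}_{(0,x_w)}(x)$ with parameters satisfying the hypotheses of \Cref{prop:g} and $w_0 \le u(1,\cdot)$ pointwise. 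For $r>3$ I would additionally take $t_0\ge T_0$; the corresponding extra time shift is absorbed by parabolic regularity into the constant $S_0$.

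Third, on $\cP_{g,\gamma}$ (with the time origin shifted to $t=1$) one has: $u$ is a super-solution, $w$ is the solution, $u(\cdot+1,\cdot)\ge w_0$ at $t=0$, and $u\ge 0 = w$ on the moving boundary $x=2t+(t+t_0)^\gamma-t_0^\gamma$. The comparison principle for the local Gompertz-type equation from \Cref{sec:estw} (valid since $g$ is Lipschitz in $u$ away from $0$ and the problem is scalar with a Dirichlet condition on the moving curve) therefore gives $w(t,x)\le u(t+1,x)$ throughout $\cP_{g,\gamma}$. Feeding the three lower bounds of \Cref{prop:g} into this inequality, and then undoing the spatial translation by $S_0$ while absorbing the time shift by $1$ into $S_0$ (using parabolic Harnack or simply that the stated bounds are robust under a bounded time shift), yields the three conclusions of the corollary.

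The main obstacle is the very first step: establishing that the slightly simplified local nonlinearity $g$ really dominates $\phi\star u$ pointwise. This requires matching constants carefully (hence the freedom to enlarge $A_g$) and checking the case where $u$ is close to $M$ separately, where $g$ is defined by cutoff rather than by the formula in~\eqref{eq:g}. The remaining work — heat-kernel lower bound at $t=1$, shifting to meet the $x_w$ hypothesis, and invoking comparison on the moving domain — is then routine and closely mirrors the construction in \cite[Section~3]{BHR_Log_Delay}.
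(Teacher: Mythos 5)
Your proposal is correct and follows essentially the same route as the paper: choose $\theta_g=M$, $A_g$ a suitable multiple of $C_{\rm conv}$ so that \Cref{lem:estphistaru} makes $u$ a super-solution of the Gompertz equation \eqref{eq:w}, fit $w_0$ under $u(1,\cdot)$, apply the comparison principle on the moving domain, and translate to meet the hypotheses of \Cref{prop:g} while absorbing the shifts into $S_0$ (the paper takes $w_0=e^{-M}u_0$ and cites \cite[Section~3]{BHR_Log_Delay} for the initial ordering, which is only a cosmetic difference from your heat-kernel argument). One small imprecision: for $u\geq\Theta_g$ the needed inequality is $\phi\star u\leq g=1$, which does not follow from $\phi\star u\leq M$; this case is harmless only because $w$ stays below $\Theta_g$ (constants above $\Theta_g$ are super-solutions of \eqref{eq:w}), so any first-touching point has $u=w\leq\Theta_g$.
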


\subsection{From a bound on $u$ on the right to the location of the front}

We are now in a position to obtain the lower bounds~\eqref{eq:log_delay_below},~\eqref{eq:big_log_delay_below}, 
and~\eqref{eq:alg_delay_below}. Thanks to  \Cref{prop:estu}, we fit a suitable translate of a traveling wave solution for (\ref{eq:w})
underneath $u$, for $x\leq 2t + t^\gamma + \sqrt t$.

Fix any $A_V> C_{\rm conv}$ and let $V$ be a traveling wave solution of
\[
    - 2 V' = V'' + V\Big(1 - A_V\log\Big(\frac{M}{V}\Big)^{1-r}\Big),~~\hbox{
    $V(-\infty) = M \exp\Big\{-A_V^{1/(r-1)}\Big\}$ and $V(+\infty) = 0$.}
\]
The existence, uniqueness up to translation, and monotonicity of $V$ is given by, for example,~\cite{BerestyckiNirenberg}.  We also recall (see~\cite{Hamel,BouinHendersonForthcoming}) the fact that there exists $\beta_{r,A_v}>0$, depending only on $r$ and $A_V$, and $\kappa > 0$ such that, as $\xi\to\infty$
\begin{equation}\label{oct202}
	V(\xi) \sim \begin{cases}
			\kappa \xi e^{-\xi}, \qquad &\text{ if } r > 3,\\
			\kappa \xi^{1+\beta_{r=3,A_V}} e^{-\xi}, \qquad &\text{ if } r = 3,
		\end{cases}
\end{equation}
and, if $r \in (1,3)$,
\begin{equation}\label{eq:tw_asymptotics2}
	\frac{\log(V(\xi))+\xi}{\xi^{(3-r)/2}}
		\sim \beta_{r,A_V}.
\end{equation}
Define $v$ as
\begin{align*}
    &v(t,x) = V\left(x - 2t + \frac32 \log t + s_0\right), &\text{ if } r>3,\\
    &v(t,x) = V\left(x - 2t + \Big(\frac32 + \bar N + \frac{\beta_{r=3,A_V}}{2}\Big)\log t + s_0\right), &\text{ if } r = 3,\\
    &v(t,x) = V\left(x - 2t + (2B_3+\beta_{r,A_V}) t^{2\gamma-1} + s_0\right), &\text{ if } r \in (1,3),
\end{align*}
where the shift $s_0$ is to be determined below and $\lambda$ is as in  \Cref{prop:g} and \Cref{prop:estu}.


\begin{lemma}\label{lem:compTW}
There exists $T_1 > 0$ and $\underline s_0$ such that if $s_0 \geq \underline s_0$, then $v(t,x) \leq u(t,x)$ if $r\neq 3$, $t \geq T_1$, and $x\leq 2t + t^\gamma + \sqrt t - S_0$ where $S_0$ is the shift given in \Cref{prop:estu}.
\end{lemma}

\begin{proof}
We prove the lemma for $r>3$, so that $\gamma<1/2$,
the proof being the same in the other cases up to situational modifications. We use the parabolic maximum 
principle. First, we note that, up to increasing $s_0$ and $A_V$, we may ensure that 
\[
v(T_1,x) \leq u(T_1,x)~~\hbox{ for 
all $x\leq 2T_1 + \sqrt T_1 + T_1^\gamma - S_0$.}
\]
Second, we claim that, up to increasing $s_0$, we have
\[
v(t,2t + \sqrt t + t^\gamma-S_0) \leq u(t,2t + \sqrt t + t^\gamma-S_0)~~\hbox{ for all $t \geq T_1$.}
\]
Indeed, for $t$ sufficiently large, (\ref{oct202}) implies, as $\gamma<1/2$:
\begin{eqnarray}\label{eq:chris13}
    &&\!\!\!\!\!\!\!\!\!\!\!\!\!\!\!\!\!\!\!\!\!\!\!\!
    v(t,2t + \sqrt{t} + t^\gamma-S_0)
        = V\Big( \sqrt{t} + t^\gamma + \frac32\log t + s_0 -S_0\Big)\\
        &&~~~~~~~\leq 2\kappa \Big(\sqrt{t} + t^\gamma + \frac32\log t + 
        s_0- S_0\Big)\exp\Big\{- \Big(\sqrt{t} + t^\gamma + \frac32\log t + s_0 - S_0 \Big)\Big\}\nonumber\\
        &&~~~~~~~\leq 4\kappa \sqrt{t} t^{-3/2} \exp\Big\{- \sqrt{t} - t^\gamma - s_0 + S_0\Big\} \leq 4\kappa t^{-1} 
        \exp\Big\{- \sqrt{t} - t^\gamma - s_0 + S_0\Big\}.\nonumber
\end{eqnarray}
It follows that
\[\begin{split}
    v(t,2t + \sqrt{t} + t^\gamma - S_0)
        \leq 4\kappa  e^{S_0 - s_0}  B_1  u(t,2t + \sqrt{t} + t^\gamma - S_0) 
        \leq u(t,2t + \sqrt{t} + t^\gamma - S_0 ),
\end{split}\]
%
for $T_1$ sufficiently large and all $s_0 \geq S_0 + \log(4\kappa B_1)$.
%
%

Third, up to increasing $A_V$, the ordering holds true near $-\infty$.  Indeed, using~\Cref{lem:estphistaru} and 
the assumptions~\eqref{eq:u_0} on $u_0$, it is easy to see that there exists $\delta>0$, depending only on $u_0$ and $\phi$ such that, 
for any $\overline x<0$ with $|\overline x|$ is sufficiently large, the function
\[
	\underline u(x) = \delta \cos((x- \overline x)/100)
\]
is a sub-solution for $u$ for all $t\geq 1$, so that $\delta = \underline u(\overline x) \leq u(t,\overline x)$ for all $t\geq 1$.
Thus, increasing $A_V$, if necessary, we have that, for all $t> 0$,
\[
	\lim_{x\to-\infty} v(t,x) < M e^{-A_V^{1/(r-1)}}
		< \delta
		\leq \inf_{t\geq 1}\liminf_{x\to-\infty} u(t,x).
\]
Now, assume for the sake of a contradiction that there exists a first touching time $(t_{\rm ft}, x_{\rm ft})$ such that 
\[
\hbox{$t_{\rm ft} \geq T_1$, $x_{\rm ft} \leq 2t_{\rm ft} + \sqrt{t_{\rm ft}} + t_{\rm ft}^\gamma - S_0$,}
\]
and
\[
u(t_{\rm ft}, x_{\rm ft}) = v(t_{\rm ft}, x_{\rm ft}),
\]
and $u(t,x) > v(t,x)$ for all $t \in [T_1, t_{\rm ft})$ and $x < 2t + \sqrt{t} + t^\gamma - S_0$.  Our goal is to obtain a contradiction by estimating $\phi \star u$ and looking at the equation satisfied by $u - v$.

First, we estimate $\phi \star u(t_{\rm ft}, x_{\rm ft})$ using \Cref{lem:estphistaru}.  By increasing $s_0$ if necessary, we obtain
\begin{equation}\label{eq:chris14}
\begin{split}
    v(t,2t + \sqrt{t} + t^\gamma-S_0)
        &= V\Big(\sqrt{t}  + t^\gamma + \frac32\log(t)+ s_0 -S_0\Big)\\
        &\geq \frac{\kappa}{2} \Big(\sqrt{t} + t^\gamma + \frac32\log t + s_0 - S_0\Big)
        \exp\Big\{- \Big(\sqrt{t} + t^\gamma + \frac32\log t + s_0 - S_0\Big)\Big\}\\
        &\geq \frac{\kappa}{2t^{3/2}} \left(\sqrt{t} + t^\gamma + \frac32\log t + s_0 - S_0\right)
        \exp\Big\{- \sqrt{t} - t^\gamma - s_0 + S_0\Big\}.
\end{split}
\end{equation}
Since $V$ is monotonic, $\gamma < 1$, and $x_{\rm ft} \leq 2t_{\rm ft} + t_{\rm ft}^{\frac12} + t_{\rm ft}^\gamma-S_0$, it follows that up to increasing $T_1$, we have that
\[
	u(t_{\rm ft},x_{\rm ft}) = v(t_{\rm ft},x_{\rm ft}) \geq M e^{-t_{\rm ft}},
\]
which, in turn, implies that
\[
	\log\Big(\frac{M}{u(t_{\rm ft},x_{\rm ft})} \Big)^{-(r-1)}
		\geq t_{\rm ft}^{-\frac{r-1}{2}} \log\Big(\frac{M}{u(t_{\rm ft},x_{\rm ft})}\Big)^{-\frac{r-1}{2}}.
\]
In view of the bound on $\phi \star u$ obtained in \Cref{lem:estphistaru}, we have that, at $(t_{\rm ft},x_{\rm ft})$,
\begin{equation}\label{eq:chris15}
	u_t - u_{xx} - u\Big(1 - A_V\log\Big(\frac{M}{u}\Big)^{-(r-1)}\Big)
		\geq \Big( A_V - C_{\rm conv}\Big) u\Big(\log\Big(\frac{M}{u}\Big)\Big)^{1-r}
		> 0,
\end{equation}
where we used the fact that $A_V > C_{\rm conv}$ in the last inequality.  In addition, we note that
\begin{equation}\label{eq:chris16}
\begin{split}
    v_t - &v_{xx} - v\Big(1 - A_V \log\Big(\frac{M}{v}\Big)^{-(r-1)}\Big)\\
        &= \Big(\frac{3}{2(t+1)} - 2\Big) V' - V'' - V\Big(1 - A_V \log\Big(\frac{M}{V}\Big)^{-(r-1)}\Big)
        = \frac{3}{2(t+1)} V' \leq 0.
\end{split}
\end{equation}
Hence, setting $\psi = u-v$,~\eqref{eq:chris15} and~\eqref{eq:chris16}, imply that
\[
	\psi_t - \psi_{xx} - c \psi > 0,
\]
where we define
\[
	c:= \frac{u\left(1 - A_V \log\Big(\frac{M}{u}\Big)^{-(r-1)}\right) - v\left(1 - A_V \log\Big(\frac{M}{v}\Big)^{-(r-1)}\right) }{u - v}.
\]
Notice that, due to the Lipschitz continuity of $w\mapsto  w\left(1 - A_V \log\Big(\frac{M}{w}\Big)^{-(r-1)}\right)$ on compact subsets of $[0,M)$, $c \in L^\infty$.  On the other hand, using that $t_{\rm ft}$ is the first time that $\psi$ touches zero and $x_{\rm ft}$ is the location of a minimum of $\psi$, we have that
\[
	\psi_t - \psi_{xx} - c \psi \leq 0.
\]
This yields a contradiction, finishing the proof.
\end{proof}

The lower bounds now follow easily.

\begin{proof}[Proof of~\eqref{eq:log_delay_below},~\eqref{eq:big_log_delay_below}, and \eqref{eq:alg_delay_below}]

We conclude the proof by noticing that, for all $t \geq T_1$,
\begin{equation}
\begin{split}
	\inf_{x \leq 2t - (3/2)\log t} u(t,x)
		&= \inf_{x \leq 0} u\Big(t,x + 2t - \frac32\log t\Big)
	        \geq \inf_{x \leq 0} v\Big(t,x + 2t - \frac32\log t\Big)\\
		&= \inf_{x \leq 0} V\Big(x + s_0\Big) = V(s_0), 
\end{split}		
\end{equation}
which means that \eqref{eq:log_delay_below} holds.  The proofs of \eqref{eq:alg_delay_below} and~\eqref{eq:big_log_delay_below} are similar and, thus, omitted.
%
%
%
%
%

\end{proof}

%
%
%
%

\section{Proof of \Cref{prop:g}}\label{sec:estw}

To obtain estimates on the solution of (\ref{eq:w}), we consider the corresponding linearized problem with the Dirichlet boundary condition:
\begin{equation}\label{eq:self_similar_v0}
	\begin{aligned}
		&\tilde v_t = \tilde v_{xx} + \tilde v, &&\text{ on } \{(t,x) : t > 0, x > 2t + (t + t_0)^\gamma - t_0^\gamma\},\\
		&\tilde v(t,2t + (t+t_0)^\gamma - t_0^\gamma) = 0,  \qquad &&\text{ for all } t >0,\\
		&\tilde v(0,x) = w_0(x), &&\text{ for all } x > 0,
	\end{aligned}
\end{equation}
where $w_0$ is as in \Cref{prop:g}.

\subsection{The case $r>3$}

The following key lemma about solutions to \eqref{eq:self_similar_v0}
allows us to prove \Cref{prop:g} when $r > 3$.
We prove this lemma in \Cref{sec:self_similar}.
\begin{lemma}\label{lem:self_similar}
Assume $r > 3$.  If $t_0$ and $x_w$ are sufficiently large, depending only on $\gamma$, there exist positive constants $T$ and $B$, depending only on $w_0$ and $t_0$, such that, for all $t \geq T$, we have~$\|\tilde v(t,\cdot)\|_\infty \leq B e^{ - t^\gamma}$ and
\[
	\tilde v(t,2t  + t^\gamma + \sqrt{t}) \geq B^{-1} t^{-1} \exp\Big\{ - \sqrt{t} - t^\gamma\Big\}.
\]
\end{lemma}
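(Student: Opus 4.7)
The plan is to reduce the moving-Dirichlet problem for $\tilde v$ to a parabolic problem on the fixed half-line $\{y > 0\}$ via a moving-frame change of variables and the usual exponential gauge, and then establish the two bounds by comparison with explicit Gaussian profiles.

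\textbf{Step 1 (change of variables).} First I would set $y := x - 2t - f(t)$ with $f(t) := (t+t_0)^\gamma - t_0^\gamma$, and define $\hat z$ by $\tilde v(t, x) = e^{-y - f(t)}\, \hat z(t, y)$. A direct computation shows that $\hat z$ satisfies
\[
	\hat z_t = \hat z_{yy} + f'(t)\, \hat z_y, \qquad y > 0, \qquad \hat z(t, 0) = 0, \qquad \hat z(0, y) = e^{y} w_0(y).
\]
Since $r > 3$ implies $\gamma < 1/2$, the integrated drift $\int_0^t f'(s)\, ds = f(t) = O(t^\gamma)$ is $o(\sqrt t)$, hence negligible on the diffusive scale. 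The target inequalities for $\tilde v$ translate into the claims $\|\hat z(t, \cdot)\|_\infty \leq C$ and $\hat z(t, \sqrt t + O(1)) \gtrsim 1/t$.

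\textbf{Step 2 (upper bound).} Next, the parabolic maximum principle applied to $\hat z$ on the half-line with Dirichlet data at $y = 0$ gives $\|\hat z(t, \cdot)\|_\infty \leq \|\hat z(0, \cdot)\|_\infty$, which is finite since in the application $w_0$ is bounded with compact support. Undoing the change of variables,
\[
	\|\tilde v(t, \cdot)\|_\infty = \sup_{y > 0} e^{-y - f(t)}\, \hat z(t, y) \leq e^{-f(t)} \|\hat z(0, \cdot)\|_\infty \leq B e^{-t^\gamma}
\]
for $t$ large, since $f(t) \geq t^\gamma - t_0^\gamma$.

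\textbf{Step 3 (lower bound).} For the lower bound I would exhibit the explicit Gaussian sub-solution
\[
	\underline z(t, y) := \delta\, \frac{y}{(t+t_0)^{3/2}}\, e^{-y^2/(4(t+t_0))},
\]
a standard positive solution of $\underline z_t = \underline z_{yy}$ vanishing at $y = 0$. Plugging into the drift equation gives
\[
	\underline z_t - \underline z_{yy} - f'(t)\, \underline z_y = -f'(t)\, \underline z_y,
\]
which is non-positive exactly when $\underline z_y \geq 0$, namely on $\{y \leq \sqrt{2(t+t_0)}\}$. The target point $y = \sqrt t + t^\gamma - f(t) = \sqrt t + O(1)$ lies well inside this region for $t$ large. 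Taking $\delta$ small enough so that $\underline z(0, \cdot) \leq \hat z(0, \cdot)$ pointwise (possible once $t_0, x_w$ are large, since $\hat z(0, \cdot) \geq \delta_w$ on $(0, x_w)$), the maximum principle gives $\hat z \geq \underline z$ at the target, so $\hat z(t, \sqrt t + O(1)) \gtrsim \sqrt t / t^{3/2} = 1/t$, which pulls back to the desired $\tilde v(t, 2t + t^\gamma + \sqrt t) \gtrsim t^{-1} e^{-\sqrt t - t^\gamma}$.

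\textbf{Main obstacle.} The hard part will be making the sub-solution argument globally rigorous: $\underline z$ fails to be a sub-solution on the exterior region $\{y > \sqrt{2(t+t_0)}\}$, where $f'(t) \underline z_y > 0$. The cleanest fix is to pass to the Galilean transform $\zeta(t, \eta) := \hat z(t, \eta - f(t))$, which satisfies the pure heat equation $\zeta_t = \zeta_{\eta\eta}$ on the slowly moving half-space $\{\eta > f(t)\}$ with Dirichlet condition at $\eta = f(t)$. Since $f(t) = o(\sqrt t)$, one can compare $\zeta$ (via the parabolic maximum principle) with the Dirichlet heat equation on the fixed half-space $\{\eta > f(T)\}$ and invoke the standard image-method heat-kernel bound $G(t, \eta, \eta_0) \sim (\eta - f(T))(\eta_0 - f(T))/t^{3/2}$ when $\eta - f(T) \sim \sqrt t$. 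A small two-stage refinement (first spread the initial mass to an intermediate time $T_1$ chosen so that $f(T_1) < x_w/2$, then propagate with the fixed-boundary kernel from $T_1$ to $t$) handles the regime $f(t) > x_w$ that arises for very large $t$.
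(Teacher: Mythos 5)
Your Step 1 gauge and Step 2 are correct, and your upper bound is actually simpler than the paper's: the paper derives the $L^\infty$ bound from a self-similar representation formula plus a separate heat-kernel estimate for the far field, whereas your maximum-principle argument for $\hat z_t=\hat z_{yy}+f'\hat z_y$ (no zeroth-order term, Dirichlet at $y=0$) gives $\sup\hat z(t,\cdot)\le\sup\hat z(0,\cdot)$ directly and the factor $e^{-f(t)}\le e^{t_0^\gamma}e^{-t^\gamma}$ does the rest. You have also correctly identified that the lower bound is where the difficulty lies, and your dipole $\underline z$ has exactly the right profile ($\hat z(t,\sqrt t)\sim 1/t$ is sharp).

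However, the fix you propose for the lower bound does not close the gap. In the Galilean frame the domain $\{\eta>f(t)\}$ shrinks without bound, since $f(t)=(t+t_0)^\gamma-t_0^\gamma\to\infty$. For a \emph{sub}-solution comparison on $[T_1,t]$ the fixed Dirichlet boundary must sit at $\eta_*\ge f(t)$ (the rightmost position of the moving boundary); placing it at $f(T_1)$ makes the fixed domain larger than the moving one, and then your comparison function is strictly positive on the moving boundary where $\zeta$ vanishes, so the inequality points the wrong way. But with $\eta_*=f(t)$, the image-method bound requires the weighted mass $\int_{f(t)}^\infty(\eta_0-f(t))\,\zeta(T_1,\eta_0)\,d\eta_0$, and since $\zeta(T_1,\cdot)$ has Gaussian decay on the scale $\sqrt{T_1}$ with $T_1$ fixed, this mass is of order $e^{-cf(t)^2/T_1}=e^{-ct^{2\gamma}/T_1}\to0$. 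Your two-stage scheme therefore yields only $\tilde v(t,2t+t^\gamma+\sqrt t)\gtrsim t^{-1}e^{-\sqrt t-t^\gamma-ct^{2\gamma}}$, which is too weak for the application (it would corrupt the $\tfrac32\log t$ delay). A related, smaller issue: $\underline z(T_1,\cdot)$ has Gaussian variance $T_1+t_0>T_1$, so it cannot be fitted under $\hat z(T_1,\cdot)$ at infinity without re-centering its time origin. To repair the main gap you would need either to iterate the fixed-boundary comparison over dyadic times $[2^k,2^{k+1}]$ — where the boundary moves only by $O(2^{k\gamma})$ and the fraction of dipole mass lost is $O(2^{k(\gamma-1/2)})$, summable precisely because $\gamma<\tfrac12$ — or to follow the paper, which passes to self-similar variables and treats the drift $\gamma t_0^{\gamma-1/2}e^{(\gamma-1/2)\tau}\bar\zeta_y$ as an integrable-in-$\tau$ perturbation of the operator $L=\partial_y^2+\tfrac{y}{2}\partial_y+1$ via $L^2$ spectral estimates (\Cref{lem:estperturbative}), again using $\gamma<\tfrac12$ quantitatively.
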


 We now finish the proof of \Cref{prop:g}.
Let $\tilde v$ be as in \Cref{lem:self_similar}. We may assume, without loss of generality, that $T\geq 1$, and set
\[
\delta = \min\Big\{B^{-1},B^{-1} \theta_g e^{-A_g^{1/(r-1)}}, e^{-T}\Big\}.
\] 
We also take  a continuous function $a(t) \leq 1$ for all $t\geq 0$, to be determined, and set
 \begin{equation}\label{oct212}
 \underline v(t,x) = \delta a(t) \tilde v(t,x).
 \end{equation}
Using~\eqref{eq:self_similar_v0}, we obtain
 \begin{equation}\label{eq:chris22}
 \underline v_t - \underline v_{xx} - \underline v(1 - g(t,\underline v))
		= \delta a' \tilde v + \delta a \tilde v_t - \delta a \tilde v_{xx} - \delta a \tilde v + \delta a \tilde v g(t, \delta a \tilde v)
		= \delta \tilde v\Big( a' + a g(t,\delta a \tilde v)\Big).
\end{equation}
Thus,  $\underline v$ is a sub-solution of $w$ for $t\ge T$ as long as 
\[
a' + a g(t,\delta a \tilde v) \leq 0.
\]
Using the upper bound on $\tilde v$ along with the definition of $\delta$, we see that this inequality would hold if 
\begin{equation}\label{oct210}
a' + a A_g \max\Big\{1, \Big[  \Big(t+A_g^\frac{1}{r-1}\Big)^{-1} 
\log\Big( \frac{1}{a e^{-t^\gamma}}\Big)\Big]^{\frac{r-1}{2}}\Big\} \log\Big(\frac{1}{ae^{-t^\gamma}}\Big)^{1-r} \leq 0.
\end{equation}
A lengthy but straightforward computation using, in particular, that $A_g\ge 1$,
shows that (\ref{oct210}) is satisfied if we take
\[
a(t) = \exp\Big\{\beta \Big[(t+1)^{2\gamma-1} -1\Big]\Big\},
\]
with a suitable $\beta>0$.


Hence $\underline v$ is a sub-solution of $w$.  Further, 
arguing as in~\cite[Section~3]{BHR_Log_Delay} and using the choice 
of~$\delta$ and $a$, we have that $\underline v(T,x) \leq w(T,x)$ for all $x \geq 2T + (T+t_0)^\gamma - t_0^\gamma$.  The maximum principle then implies that 
 $\underline v(t,x) \leq w(t,x)$ for all $t > T$ and $x > 2t + (t+t_0)^\gamma - t_0^\gamma$.  The conclusion of the proposition follows immediately from \Cref{lem:self_similar} since $2t + t^\gamma \geq 2t + (t+t_0)^\gamma - t_0^\gamma$.

\subsection{The case $r = 3$}

We follow here the same strategy as for $r>3$, but the estimates on $\tilde v$ are obtained differently. 

\begin{lemma}\label{lem:self_similar_critical}
For $r = 3$ and $t$ sufficiently large, there exist $\lambda$ and $B>0$ such that 
\[
	\|\tilde v(t,\cdot)\|_\infty  \
		 \leq B e^{-\sqrt t},
\]
and 
\[
	\tilde v(t,2t  + 2 \sqrt{t} ) 
		\geq B^{-1} t^{-1-\lambda} \exp\Big\{ - 2 \sqrt{t} \Big\}.
\]
\end{lemma}

With this lemma, proved in Section~\ref{sec:subsolr=3},
 one may repeat the argument for $r>3$, building a sub-solution
$\underline v(t,x)$ as in~(\ref{oct212}), with $\delta>0$ sufficiently small,
and $a(t)$ such that
\[
	a'  + a A_g \max \Big\{1, \Big(t + \sqrt A_g\Big)^{-1} \log\Big(\frac{e^{ \sqrt t}}{C\delta a}\Big) \Big\}\log\Big(\frac{e^{\sqrt t}}{C\delta a}\Big)^{-2} \leq 0.
\]
The above inequality is satisfied with $a(t) = (t+\sqrt A_g)^{-N}$ 
for all $t\geq 1$ so long as $\delta$ is chosen small enough and $N$ is chosen large enough, depending only on $A_g$ and $C$.

\subsection{The estimate when $r\in (1,3)$}

Here we directly construct a sub-solution of $w$.  
We seek a sub-solution $\tilde v$ solving
\begin{equation}\label{eq:linearized}
	\begin{aligned}
		&\tilde v_t \leq \tilde v_{xx} + \tilde v,  &&\text{ for}~ t > 0, x > 2t + (t+1)^\gamma -1,\\
		&\tilde v(t,2t + (t+1)^\gamma-1) = 0, \qquad &&\text{ for } t > 0.
	\end{aligned}
\end{equation}
%
%
%
Recall that $t_0=1$ in parts 2 and 3 of \Cref{prop:g}. Given $a>0$, set
\begin{equation}\label{eq:sub-solution_v}
	v(t,x) = \frac{x}{(1+t)^3}\exp\Big\{-x - \frac{\gamma}{2} x \Big(1+t\Big)^{ \gamma-1} - 
	\Big(1+t\Big)^\gamma - \Big[\frac{\gamma^2}{4(2\gamma - 1)} + a\Big](1+t)^{2\gamma - 1}  - \frac{x^2}{2(1+t)} \Big\}.
\end{equation}
Here, the key computation is the following:
\begin{lemma}\label{lem:sub-solution}
There exists $a_0>0$ such that if $a \geq a_0$ 
then $\tilde v(t,x) = v(t,x-(2t + (t+1)^\gamma-1))$ solves \eqref{eq:linearized}. 
\end{lemma}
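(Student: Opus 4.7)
The plan is to reduce the inequality in \eqref{eq:linearized} to an algebraic inequality in a suitable self-similar variable, exploit several structural cancellations built into the ansatz for $v$, and then choose $a$ large enough to absorb the one surviving ``bad'' term.

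\textbf{Step 1: Move to the comoving frame.} Let $\xi = x - (2t + (1+t)^\gamma - 1)$, so that $\tilde v(t,x) = v(t,\xi)$. Direct differentiation gives $\tilde v_t = v_t - \bigl(2 + \gamma(1+t)^{\gamma-1}\bigr)v_\xi$ and $\tilde v_{xx}=v_{\xi\xi}$. Thus the differential inequality \eqref{eq:linearized} becomes
\begin{equation*}
v_t \leq v_{\xi\xi} + \bigl(2 + \gamma(1+t)^{\gamma-1}\bigr) v_\xi + v, \qquad t>0,\ \xi > 0,
\end{equation*}
while the boundary condition at $x = 2t+(1+t)^\gamma - 1$ becomes $v(t,0)=0$, which is automatic since $v$ carries the prefactor $\xi$.

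\textbf{Step 2: Compute logarithmic derivatives.} Writing $\tau = 1+t$ and
\begin{equation*}
v(t,\xi) = \xi\, \tau^{-3} e^{\psi(t,\xi)}, \qquad \psi = -\xi - \frac{\gamma}{2}\xi \tau^{\gamma-1} - \tau^\gamma - \Big[\tfrac{\gamma^2}{4(2\gamma-1)}+a\Big]\tau^{2\gamma-1} - \frac{\xi^2}{2\tau},
\end{equation*}
one has $v_t/v = -3/\tau + \psi_t$, $v_\xi/v = 1/\xi + \psi_\xi$, and $v_{\xi\xi}/v = \psi_\xi^2 + 2\psi_\xi/\xi + \psi_{\xi\xi}$. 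Dividing the above inequality by $v>0$, it becomes
\begin{equation*}
-\frac{3}{\tau}+\psi_t \;\leq\; \psi_\xi^2 + \psi_{\xi\xi} + (2+\gamma\tau^{\gamma-1})\psi_\xi + \frac{2\psi_\xi + (2+\gamma\tau^{\gamma-1})}{\xi} + 1.
\end{equation*}

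\textbf{Step 3: Verify the cancellations.} A direct computation of $\psi_t,\psi_\xi,\psi_{\xi\xi}$ yields the key algebraic identities. First, the coefficient-of-$1/\xi$ terms telescope: $2\psi_\xi + (2+\gamma\tau^{\gamma-1}) = -2\xi/\tau$, so the $1/\xi$ piece contributes only $-2/\tau$. Second, $\psi_\xi^2 + (2+\gamma\tau^{\gamma-1})\psi_\xi$ reduces, after expansion, to $-1-\gamma\tau^{\gamma-1}-\frac{\gamma^2}{4}\tau^{2\gamma-2} + \xi^2/\tau^2$ (the cross terms are chosen precisely so the linear-in-$\xi$ pieces cancel). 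The purpose of the term $-\tau^\gamma$ in $\psi$ is to absorb the $-\gamma\tau^{\gamma-1}$ arising on the right, and the purpose of $-\frac{\gamma^2}{4(2\gamma-1)}\tau^{2\gamma-1}$ is to cancel the $\frac{\gamma^2}{4}\tau^{2\gamma-2}$ arising on the right. After all cancellations, the inequality collapses to
\begin{equation*}
\frac{\gamma(1-\gamma)}{2}\,\xi\,\tau^{\gamma-2} \;\leq\; \frac{\xi^2}{2\tau^2} + a(2\gamma-1)\tau^{2\gamma-2}.
\end{equation*}

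\textbf{Step 4: Choose $a$.} Setting $s = \xi \tau^{-\gamma}\geq 0$ and multiplying by $\tau^{2-2\gamma}$, the inequality becomes
\begin{equation*}
a(2\gamma-1) \;\geq\; \frac{\gamma(1-\gamma)}{2}\, s - \frac{s^2}{2}.
\end{equation*}
Since $\gamma\in(1/2,1)$ for $r\in(1,3)$, we have $2\gamma-1>0$; the right-hand side is a concave function of $s$ with maximum $\gamma^2(1-\gamma)^2/8$ attained at $s = \gamma(1-\gamma)/2$. Hence the inequality holds for all $s\geq 0$ as soon as
\begin{equation*}
a \;\geq\; a_0 := \frac{\gamma^2(1-\gamma)^2}{8(2\gamma-1)},
\end{equation*}
which completes the verification that $\tilde v$ is a sub-solution of \eqref{eq:linearized}.

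The entire argument is essentially a design-verification: the exponent $\psi$ has been engineered so that every polynomial-in-$\tau$ and linear-in-$\xi$ contribution cancels, the $\xi^2/\tau^2$ contribution from $\psi_\xi^2$ is dominated by the Gaussian generated by $-\xi^2/(2\tau)$, and the only residual obstruction is a bounded-in-$s$ quadratic expression controlled by the free parameter $a$. There is no real obstacle; the only care required is to keep track of the many terms and observe the cancellations faithfully.
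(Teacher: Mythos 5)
Your verification is correct: all the cancellations check out, and you arrive at exactly the paper's threshold $a_0 = \gamma^2(1-\gamma)^2/(8(2\gamma-1))$. This is essentially the same argument as the paper's — the paper \emph{derives} the ansatz by passing through self-similar variables $(\tau,y)=(\log(1+t),(1+t)^{-1/2}x)$ and peeling off factors one at a time, whereas you verify the final formula directly in the comoving frame, but the underlying algebra and the resulting condition on $a$ are identical.
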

We delay the proof of \Cref{lem:sub-solution} until \Cref{sec:sub-solution} and proceed with the proof of \Cref{prop:g}.

\paragraph{A bound for small times.}

Unfortunately, $v$ is not compactly supported at $t=0$, so we need to ``fit it under'' $w$ at a later time.  To do this, we first obtain a preliminary   
lower bound on~$w$ at time~$1$ by using the infinite speed of propagation of the heat equation.    
Recall that~$w_0 \geq \delta_w \1_{(-\infty,x_w)}$ and  $1 - g(t,w) \geq 0$.  
Hence, we have 
\[
w_t - w_{xx} \geq 0,
\]
so that $w$ is a super-solution to the heat equation with a Dirichlet boundary condition fixed 
at 
\[
 \overline x_0 := 2\cdot2+(2+t_0)^\gamma - t_0^\gamma = 3^\gamma +3,
\]
on the time interval $[0,2]$.  
It follows that
\begin{equation}\label{eq:w1_lower_bound}
\begin{split}
	w(2,x + \overline x_0)
		&\geq \frac{1}{\sqrt{ 8 \pi}} \int_0^\infty w_0(y {+}\overline x_0) 
		\Big[e^{-|x-y|^2/{8}} - e^{-{|x+y|^2}/{8}}\Big] dy\\
		&\geq \frac{\delta_w e^{-{x^2}/{8}} }{\sqrt{ 8 \pi}} \int_0^{x_w -\overline x_0}  
		e^{- {y^2}/{8}}   \Big[e^{{xy}/{4}} - e^{-{xy}/{4}}\Big] dy\\
		&\geq \frac{\delta_w e^{-{x^2}/{8} - {(x_w-\overline x_0)^2}/{8}}}{\sqrt{ 8 \pi}} 
		\frac{2}{x} \Big( \cosh\Big(\frac{x(x_w -\overline x_0)}{4}\Big) - 1 \Big)
		\geq \frac{x}{C}\delta_w e^ {- {x^2}/{8}},
\end{split}
\end{equation}
for some $C$ independent of all parameters, as long as $x_w \geq \overline x_0+1$.   We used here that $\cosh(x) - 1 \geq x^2 / C$ for some universal $C>0$.   On the other hand, from the explicit expression~\eqref{eq:sub-solution_v}
for~$v$, we get 
\[
v(2,x-\bar x_0) \leq C (x-\overline x_0) 
\exp\Big\{-x- x\farc{\gamma}{2}3^{\gamma-1}-
\farc{x^2}{6}+\farc{x\overline x_0}{3}\Big\}.
\]
Thus, there exists $\epsilon>0$ such 
that~
\[
\epsilon \tilde v(2,x)=\epsilon v(2,x-\bar x_0) \leq w(2,x)
\hbox{ for~$x \geq  \bar x_0= 3^\gamma+3$.}
\]

%
%

\paragraph{The subsolution.}

We now follow the same strategy as before, constructing a sub-solution
of the form $\underline  v(t,x) = \delta a(t)  \tilde v(t,x) $ on
\[
\mathcal{P} := \{(t,x) : t \geq 1, x > 2t + (1+t)^\gamma - 1\}.
\]
Another lengthy but straightforward computation shows that $\underline v(t,x)$
is a sub-solution for $w$ on $\mathcal P$ if we 
choose $a(t) = \exp\Big\{-\beta t^{2\gamma-1}\Big\}$ for a suitable $\beta$ and $\delta$ sufficiently small.


Note also that 
$\tilde v$ and $w$ satisfy the same boundary conditions  
at~$x = 2t + (1+t)^\gamma - 1$.  
Finally, choosing $\delta \leq \epsilon$ and using the computation 
\eqref{eq:w1_lower_bound} and the discussion following it, we see that
\[
\underline v(2,x) \leq w(2,x)\hbox{ for all $x > 3 + 3^\gamma$.}
\]
The conclusion of the proposition when $r \in (1,3)$ follows 
by simply using the 
explicit form of $\underline v(t,x)$.

\section{Estimates on the linearized KPP equation}\label{sec:estlinKPP}

In this section, we adopt the convention that any constant denoted $C$ may chance line-by-line but depends only on $\phi$ and $u_0$.

%
%
\subsection{The case $r>3$: the proof of \Cref{lem:self_similar}}\label{sec:self_similar}

The key observation is that $\gamma < 1/2$ when $r>3$.  Thus, the $t^\gamma$ term is of a lower order than
the diffusive scale $\sqrt{t}$.  
This allows us to use the strategy 
in~\cite{HNRR_Short}, obtaining energy estimates in self-similar variables.  
Since the present proof is similar to that in~\cite{HNRR_Short}, we provide a rather brief treatment.

\begin{proof}[Proof of \Cref{lem:self_similar}]
We begin by removing an exponential factor from $\tilde v$ and changing to the moving frame: let
\[
	z(t,x) := e^{x} \tilde v(t,2t + (t+t_0)^\gamma - t_0^\gamma + x),~~x>0.
\]
This function satisfies
\begin{equation}\label{eq:self_similar_z}
	\begin{aligned}
		&z_t = z_{xx} + \gamma(t+t_0)^{\gamma - 1} \Big(z_x - z\Big), \qquad &&\text{ for } t > 0, x > 0,\\
		&z(t,0) = 0,   &&\text{ for } t>0, \\
		&z(0,x) = e^{x} w_0(x), &&\text{ for } x > 0.
	\end{aligned}
\end{equation}
We now turn to self-similar variables, which are natural for the diffusive process. Let 
\begin{equation*}
\tau = \log\Big(1+\frac{t}{t_0}\Big), \qquad y = (t + t_0)^{-1/2}x,
\end{equation*}
and $\zeta(\tau, y) = z\Big(t_0(e^\tau - 1), t_0^{1/2}e^{\tau/2} y\Big)$. Then $\zeta$ satisfies the equation
\[
	\zeta_\tau =  \zeta_{yy} + \frac{y}2 \zeta_y +1  + \gamma(t_0e^\tau)^{\gamma-1/2} \zeta_y - 
	\Big( 1+ \gamma(t_0e^\tau)^{\gamma}\Big)\zeta. 
\]
We remove the integrating factor above, setting
\[
\zeta(\tau, y) = e^{- ( \tau + t_0^\gamma (e^{\gamma\tau}-1))} \bar\zeta(\tau, y),
\]
so that $\bar\zeta$ satisfies
\begin{equation}\label{oct402}
\bar\zeta_\tau = L\bar\zeta + \gamma t_0^{\gamma-1/2} e^{(\gamma-1/2)\tau}  \bar\zeta_y,
\end{equation}
with
\begin{equation}\label{oct408}
L:= \partial_y^2 + \frac{y}2\partial_y + 1.
\end{equation}
It is now heuristically clear that the last term in (\ref{oct402}) should be not important due to the $e^{(\gamma-1/2)\tau}$ term and the fact that $\gamma < 1/2$. The following lemma is proved in \Cref{sec:lemestpert}.
\begin{lemma}\label{lem:estperturbative}
Let $\bar\zeta$ solve
\[
	\bar\zeta_\tau = L\bar\zeta + \eps e^{(\gamma-1/2)\tau}  \bar\zeta_y, 
\]
with initial data $\bar\zeta(\tau=0,\cdot) = \bar\zeta_0$.  
There exists $\eps_0 > 0$ such that for all compact subsets $K \subset \R_+$ there exists $C_K >0$ such that for all $\eps < \eps_0$, 
\begin{equation*}
\bar\zeta(\tau,y) = y \Big( \frac{e^{-{y^2}/{4}}}{2\sqrt{\pi}} \Big( \int_0^\infty \xi \bar\zeta_0(\xi) d\xi + O(\eps) \Big) 
+ e^{(\gamma-1/2)\tau} \bar h(\tau,y) \Big),
\end{equation*}
for all $y>0$, $\tau>0$, and such that $\vert \bar h(\tau,y) \vert \leq C_K$ for all $\tau >0$ and $y \in K$.
\end{lemma}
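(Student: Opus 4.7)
The plan is to use a spectral decomposition of the unperturbed operator $L$ on the half-line and then treat the $\varepsilon$-term as a small non-autonomous perturbation. On the weighted space $\mathcal{H} := L^2(\R_+, e^{y^2/4}dy)$ with Dirichlet condition at $y=0$, $L$ is self-adjoint with ground state $\varphi_0(y) = ye^{-y^2/4}$ (a direct check yields $L\varphi_0 = 0$ and $\|\varphi_0\|_\mathcal{H}^2 = \int_0^\infty y^2 e^{-y^2/4}dy = 2\sqrt\pi$) and spectral gap $1/2$ to the next eigenvalue, so that $e^{\tau L}$ contracts like $e^{-\tau/2}$ on the orthogonal complement $\varphi_0^\perp$. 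I would then decompose
\[
\bar\zeta(\tau,y) = c(\tau)\varphi_0(y) + R(\tau,y), \qquad c(\tau) := \frac{1}{2\sqrt\pi}\int_0^\infty y\,\bar\zeta(\tau,y)\,dy, \qquad R\perp\varphi_0 \text{ in } \mathcal H.
\]

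Testing the evolution against $y$ (which lies in the weak kernel of $L^*$, the boundary terms at $0$ and $\infty$ being killed by the Dirichlet condition and the decay of $\bar\zeta$) and integrating by parts yields
\[
c'(\tau) = -\frac{\eps e^{(\gamma-1/2)\tau}}{2\sqrt\pi}\int_0^\infty \bar\zeta(\tau,y)\,dy = -\frac{\eps e^{(\gamma-1/2)\tau}}{\sqrt\pi}\,c(\tau) + O\!\left(\eps e^{(\gamma-1/2)\tau}\|R(\tau)\|_\mathcal H\right),
\]
where I used $\int_0^\infty \varphi_0 = 2$ and the Cauchy--Schwarz bound $|\int_0^\infty R\,dy| \lesssim \|R\|_\mathcal H$. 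Simultaneously, $R$ satisfies $R_\tau = LR + \eps e^{(\gamma-1/2)\tau}Q\bar\zeta_y$ with $R(\tau,0)=0$, where $Q$ projects onto $\varphi_0^\perp$. A weighted energy estimate, exploiting $\langle L\bar\zeta,\bar\zeta\rangle_\mathcal H \le 0$ and the dissipative sign of $\langle\bar\zeta_y,\bar\zeta\rangle_\mathcal H = -\tfrac14\int_0^\infty y\bar\zeta^2 e^{y^2/4}dy$ coming from the non-negativity of $\bar\zeta$, shows that $\|\bar\zeta(\tau)\|_\mathcal H$ and, via a standard first-derivative bootstrap, $\|\bar\zeta_y(\tau)\|_\mathcal H$ remain uniformly bounded in $\tau$ for $\eps$ small. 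Duhamel combined with the spectral gap then gives
\[
\|R(\tau)\|_\mathcal H \le e^{-\tau/2}\|R(0)\|_\mathcal H + C\eps\int_0^\tau e^{-(\tau-s)/2}e^{(\gamma-1/2)s}\,ds \le C\eps\,e^{(\gamma-1/2)\tau},
\]
since $\gamma<1/2$. Substituting this into the ODE for $c$ and applying Gr\"onwall on $[0,\infty)$, where $e^{(\gamma-1/2)\tau}$ is integrable, produces $c(\tau) = c(0) + O(\eps)$ uniformly in $\tau$, with $c(0) = (2\sqrt\pi)^{-1}\int_0^\infty \xi\bar\zeta_0(\xi)\,d\xi$.

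The main obstacle is the last step: converting the weighted $L^2$-bound on $R$ into a pointwise bound on an arbitrary compact $K\subset \R_+$. The equation for $R$ is uniformly parabolic with smooth coefficients on any compact space-time set and has a Dirichlet condition at $y=0$, so I would apply interior parabolic regularity (De Giorgi--Moser) together with local boundary Schauder estimates to upgrade $\|R(\tau)\|_\mathcal H \lesssim \eps e^{(\gamma-1/2)\tau}$ to $\|R(\tau,\cdot)\|_{L^\infty(K)} \le C_K\,\eps\, e^{(\gamma-1/2)\tau}$. Since $R$ is $C^1$ up to $y=0$ and vanishes there, writing $R(\tau,y) = y\,e^{(\gamma-1/2)\tau}\bar h(\tau,y)$ yields $|\bar h|\le C_K$ on $K$, and combining this with $c(\tau)\varphi_0(y) = y\cdot \frac{e^{-y^2/4}}{2\sqrt\pi}\bigl(\int_0^\infty\xi\bar\zeta_0(\xi)\,d\xi + O(\eps)\bigr)$ gives the stated representation. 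The delicate technical point is the localisation, and the uniformity in $\tau$ of the regularity estimates, which I would handle by a cut-off combined with Moser iteration on annuli so that the unbounded first-order coefficient of $L$ at infinity plays no role on $K$.
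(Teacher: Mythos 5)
Your argument is correct and is essentially the paper's proof: working in the weighted space $L^2(\R_+,e^{y^2/4}dy)$ is the same as the paper's conjugation $\bar\zeta=e^{-y^2/8}\zeta^*$, after which both proofs project onto the ground state $ye^{-y^2/4}$ of the resulting self-adjoint (Dirichlet) operator, show its coefficient is conserved up to $O(\eps)$ by integrating the $\eps e^{(\gamma-1/2)\tau}$ forcing over $(0,\infty)$, and use the spectral gap to obtain the $e^{(\gamma-1/2)\tau}$ decay of the orthogonal part --- the only real difference being that the paper gets this last decay from a direct energy inequality (in which the $\eps e^{(\gamma-1/2)\tau}\bar\zeta_y$ term contributes a sign-definite boundary/weight term), which spares it your Duhamel-plus-$H^1$ bootstrap for $\|\bar\zeta_y\|_{\mathcal H}$, while you are more explicit than the paper about the final parabolic-regularity upgrade from $L^2$ to the pointwise bound on compacta and the extraction of the factor $y$ at the boundary. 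One small correction: the homogeneous part of your Duhamel estimate contributes $e^{-\tau/2}\|R(0)\|_{\mathcal H}$, which is $O(e^{(\gamma-1/2)\tau})$ but not $O(\eps\, e^{(\gamma-1/2)\tau})$ since $R(0)$ is not small; this is harmless, because the lemma only requires $\bar h$ bounded, and the Gr\"onwall step for $c(\tau)$ still gives $c(\tau)=c(0)+O(\eps)$ after integrating $\eps e^{2(\gamma-1/2)\tau}$ in time.
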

Undoing the various changes of variable, we get
\begin{eqnarray}\label{oct406}
&&\tilde v(t,2t + (t+t_0)^\gamma - t_0^\gamma + x)
= e^{-x} z(t,x)
= e^{-x} \zeta\Big( \log\Big( 1+ \frac{t}{t_0}\Big), \frac{x}{(t+t_0)^{1/2}} \Big)\\
&&=  \frac{xe^{-x} t_0e^{- (( t+ t_0)^\gamma - t_0^\gamma)} }{(t+t_0)^{3/2}}
\Big( \frac{e^{-\frac{x^2}{4(t+t_0)}}}{2\sqrt{\pi}} \Big( \int_0^\infty \xi 
e^\xi w_0(\sqrt{t_0}\xi) d\xi + O\Big(t_0^{\gamma-\frac12}\Big) \Big) + \Big(1+ \frac{t}{t_0}\Big)^{\gamma-\frac12} h(t,x)\Big),
\nonumber
\end{eqnarray}
where $h(t,x) = \bar h\Big(\log\Big( 1+ \frac{t}{t_0}\Big),(t+t_0)^{-\frac12}x\Big) 
$.

First, notice that the $L^\infty$ bound on $\tilde v$ in \Cref{lem:self_similar} follows immediately from the expression above on sets of the form $[2t + t^\gamma, 2t + t^\gamma + \sigma \sqrt t]$.  To obtain bounds on sets of the form $[2t + t^\gamma + \sigma\sqrt t, \infty)$, 
we simply use that $e^{-t}\tilde v$ is a sub-solution to the heat equation on $\R$ (see, e.g.,~\eqref{eq:heat_subsolution}).  Hence, we obtain that, for $x\geq 0$,
\begin{equation}\label{eq:c102}
	\tilde v(t,2t + t^\gamma + \sigma\sqrt t+x)
		\leq \frac{Ce^t}{\sqrt t} \exp\Big\{-\frac{(2t + t^\gamma + \sigma\sqrt t+ x)^2}{4t}\Big\}
		\leq C e^{ - \sqrt t - t^\gamma },
\end{equation}
where $C$ is some constant depending only on the initial data and $\gamma$.

Second, we have
\begin{equation*}
\int_0^\infty \xi e^\xi w_0(t_0^{1/2}\xi) d\xi + O( t_0^{\gamma-\frac12})   
\geq \delta_w \int_0^{{x_w}/{t_0^{1/2}}} \xi e^\xi d\xi+ O( t_0^{\gamma-\frac12}).
\end{equation*}
Choosing first $x_w \geq \sqrt t_0$ and $t_0 \gg 1$ so that the first two terms in the parentheses 
in (\ref{oct406}) are positive and then choosing $T_0$ large depending on $t_0$ and $\alpha$, 
we have that, for all $0 \leq x \leq \sqrt{t + t_0}$ and $t \geq T_0$,
\[
	\tilde v(t,2t + (t+t_0)^\gamma - t_0^\gamma + x)
		\geq \frac{x}{C} \frac{e^{-x -(( t+ t_0)^\gamma - t_0^\gamma)} }{(t+t_0)^{3/2}}.
\]
The lower bound on $\tilde v(t,2t+t^\gamma + \sqrt{t})$ is immediate after evaluating at $x= t^\gamma - \Big( (t+t_0)^\gamma - t_0^\gamma \Big) + \sqrt{t}$.  This concludes the proof.

\end{proof}
\subsection{The case $r =3$: the proof of \Cref{lem:self_similar_critical}}\label{sec:subsolr=3}

Note that in this case $\gamma = 1/2$. As a consequence, the drift induced by the moving boundary has the same order as the diffusion.  
It is thus useful to modify the $t^\gamma$ term in the moving boundary by a small multiplicative factor.

\begin{proof}[Proof of \Cref{lem:self_similar_critical}]

To begin, fix $\epsilon>0$. Work in the moving frame $2t +  [ ( t + 1)^{1/2} - 1 ]$ and remove an exponential factor, as previously: \begin{equation*}
z(t,x) := e^x \tilde v(t, x + 2t + [ (1+t)^{1/2} - 1 ]).
\end{equation*}
Passing then to self-similar coordinates
\begin{equation*}
\tau = \log(t+1) \quad \text{ and } \quad  y = (t + 1)^{-1/2}x,
\end{equation*}
so that 
\[
	\zeta(\tau, y) := z\Big( e^\tau - 1, e^{{\tau}/{2}} y\Big),
\]
we see that $\zeta$ satisfies
\[
	\zeta_\tau = L\zeta + \frac{1}{2}\zeta_y - \Big(1 + \frac{1}{2}e^{{\tau}/{2}}\Big) \zeta,
\]
with $L$ as in (\ref{oct408}). Finally, pulling out the zeroth order factor 
\[
	\zeta(\tau,y) = e^{- \tau - (e^{{\tau}/{2}} - 1)} \bar \zeta(\tau,y),
\]
we see that $\bar \zeta$ solves
\begin{equation}\label{eq:c001}
	\bar\zeta_\tau = L\zeta + \frac{1}{2}\zeta_y.
\end{equation}

We finish using the following lemma, proved in \Cref{sec:lemestpert}.  This result falls outside 
of~\cite{HNRR_Short} and \Cref{lem:estperturbative} because the $\bar\zeta_y$ term in~\eqref{eq:c001} is no longer a remainder term.
\begin{lemma}\label{lem:estperturbativeeps}
Let $\bar\zeta$ solve (\ref{eq:c001}), then it can be represented as
\begin{equation}\label{eq:c002}
	\bar\zeta(\tau,y)
		= \exp\Big\lbrace - \frac{y^2}{8} - \frac{y}{4} \Big\rbrace
			\Big(\Big(\int_{\R^+} \psi(y) e^{\frac{y^2}{8} + \frac{y}{4}}  \bar\zeta(0,y) \, dy \Big) \psi(y) e^{-\lambda \tau} + y\bar h(\tau,y) e^{-\mu \tau}\Big).
\end{equation}
Here, $\mu > \lambda > 0$,
$\|\psi\|_2 = 1$, and $\psi$ and $\overline h(\tau,y)$ are bounded on all compact subsets of $[0,\infty)$.  Also, for every compact set $K\subset[0,\infty)$, there exists $C_K>0$ such that $C_K^{-1} y \leq \psi(y) \leq C_K y$. for all $y\in K$.
\end{lemma}

We are now able to establish the upper bound on $\tilde v$ for all $x \in [2t + ((1+t)^{1/2} - 1), 2t + 2 \sqrt t]$.  Indeed, returning to the original variables in~\eqref{eq:c002}, we find
\begin{equation}\label{eq:c3}
\begin{split}
	|\tilde v(t,x)|
		&= e^{-(1+t)^{1/2} y} \zeta(\tau, y)
		= e^{-(1+t)^{1/2} y - \tau - (e^{\tau/2} - 1)} \bar \zeta(\tau, y)\\
		&\leq C e^{-(1+t)^{1/2} y - \tau - (e^{\tau/2} - 1) - \frac{y^2}{8} - \frac{y}{4}} \left( \psi(y) e^{-\lambda \tau} + h(\tau,y) e^{- \mu \tau}\right)
		\leq Ce^{- \sqrt t}.
\end{split}
\end{equation}

In fact, the estimate~\eqref{eq:c3} holds for all $x\geq 2t + [(1+t)^{1/2} - 1]$ since, as above, $\tilde v$ may be estimated  for $x\in [2t + 2 \sqrt t, \infty)$ using the same approach as in~\eqref{eq:c102} (see also~\eqref{eq:heat_subsolution}).  Indeed,
\[
	\tilde v(t,x)
		\leq \frac{C \sqrt t}{x} e^{t - \frac{x^2}{2t}}
		\leq \frac{C}{\sqrt t} e^{t - \frac{(2t + 2 \sqrt t)^2}{4t}}
		= \frac{C}{\sqrt t} e^{- 2\sqrt t - 1}
		\leq Ce^{- \sqrt t}.
\]

We now establish the lower bound from~\eqref{eq:c002}.  Taking $t$ sufficiently large and evaluating at $x = 2t + 2 \sqrt t$, we see that
\[
	\tilde v(t, 2t + 2\sqrt t)
		\geq \frac{\alpha}{t^{1 + \lambda}} e^{-2\sqrt t},
\]
for some $\alpha$ depending only on $u_0$.  This concludes the proof.

\end{proof}

\subsection{The case $r\in (1,3)$: the proof of \Cref{lem:sub-solution}}\label{sec:sub-solution}

To motivate some of the steps in the following proof, we briefly discuss a heuristic.  In the stationary frame, as we did in~\eqref{eq:heat_subsolution}, 
we may always estimate $\tilde v$ above by ignoring the Dirichlet boundary condition and using the fact that $e^{-t}\tilde v$ solves the heat equation.  Thus,
\[
 \tilde v(t,x + 2t + t^\gamma)
		\lesssim t^{-1/2} \exp\Big\{t - \frac{(x + 2t + t^\gamma)^2}{4t}\Big\}
		= t^{-1/2} \exp\Big\{- x - \frac{x^2}{4t} - \frac{x}{\sqrt{t}}\frac{t^{\gamma - 1/2}}{2} 
		- t^\gamma - \frac{t^{2(\gamma-1/2)}}{4} \Big\}.
\]
Recalling that $\gamma > 1/2$, we see that on the diffusive scale $x \sim \sqrt t$, the Gaussian term ${x^2}/{4t}$ 
and the~$t^{-1/2}$ in front are (much) lower order and, thus, negligible, but all other terms are large. Hence, our sub-solution should contain all 
such terms to be reasonably sharp.  In particular, while the $xt^{\gamma-1}$ term appears small at first glance since $\gamma < 1$,  it is not 
negligible in the diffusive scale $x \sim \sqrt t$.  While the terms depending only on $t$ show up as obvious integrating factors, this term will 
not.  Hence, the key to the proof below is in carefully taking account of this term.  Note that here we see the effect of  $\gamma > 1/2$.  

\begin{proof}[Proof of \Cref{lem:sub-solution}]

We show how to ``guess'' the form of the sub-solution $v$. 
We begin by removing an exponential from $\tilde v$ and changing to the moving frame. Define, for $x\in \R^+$, 
\[
	z(t,x) := e^{x} \tilde v(t,2t + (t+1)^\gamma - 1 + x),
\]
so that~\eqref{eq:linearized} becomes
\begin{equation}\label{eq:self_similar_v}
	\begin{aligned}
&		z_t \leq z_{xx} + \gamma(t+1)^{\gamma - 1}(z_x - z), ~~ t > 0, x > 0,\\
&		z(t,0) = 0,   \\
&		z(0,x) = e^{x} w_0(x).
	\end{aligned}
\end{equation}
Turning to self-similar variables,  
\begin{equation*}
	\tau = \log(1+t),
		\quad y = (t + 1)^{-1/2}x,
		\quad \text{ and } \quad \zeta(\tau, y) = z ( e^\tau - 1, e^{\tau/2} y ),
\end{equation*}
we wish to construct $\zeta$ that satisfies the inequality
\begin{equation}\label{oct412}
	\zeta_\tau \leq \zeta_{yy} + \frac{y}2 \zeta_y + \gamma e^{(\gamma-1/2)\tau}\zeta_y -   \gamma e^{\gamma\tau} \zeta. 
\end{equation}
%
As $\gamma > 1/2$, the drift in (\ref{oct412}) is not a perturbation anymore. The heuristic discussion preceding this proof  
indicates that we should consider 
\[
\zeta(\tau, y) = e^{-\alpha y e^{(\gamma-1/2)\tau}} \psi(\tau,y),
\]
with $\alpha\in \R$ to be determined.  Then we require
\begin{equation}\label{eq:chris20}
	\psi_\tau
		 \leq L\psi + (\gamma - 2\alpha) e^{(\gamma-1/2)\tau} \psi_y 
		 - \Big(1 + \alpha(\gamma - \alpha) e^{(2\gamma-1)\tau}   + \gamma e^{\tau\gamma}\Big) \psi - \alpha (1-\gamma) y 
		 e^{(\gamma - {1}/{2})\tau} \psi.
\end{equation}
with $L$ as in (\ref{oct408}). 
%
To remove the drift term, we set $\alpha = {\gamma}/{2}$.  Then~\eqref{eq:chris20} becomes
\[
	\psi_\tau
		-L\psi
		+ \Big(1 + \frac{\gamma^2}{4} e^{(2\gamma-1)\tau} 
		+ \gamma e^{\tau\gamma}\Big) \psi
		+ \frac{\gamma}{2} (1-\gamma) y e^{(\gamma - {1}/{2})\tau} \psi
		\leq 0.
\]
Further, writing 
\[
\psi(\tau, y) = \exp\Big\{ -\tau - e^{\gamma \tau} - \frac{\gamma^2}{4(2\gamma - 1)} e^{(2\gamma - 1)\tau}\Big\} \Psi(\tau,y),
\]
we arrive at
\begin{equation}\label{eq:chris21}
\Psi_\tau - L\Psi + \frac12 \gamma (1-\gamma) y e^{(\gamma - {1}/{2})\tau} \Psi \leq 0.
\end{equation}
To deal with the last term in~\eqref{eq:chris21}, 
let $a,a',b >0$ be constants to be determined and define
\[
	\Psi(\tau,y) = y\exp\Big\{-a e^{\tau(2\gamma-1)} - a'\tau - \farc{y^2}{b}\Big\}.
\]
By a direct computation, we see that
\begin{equation}\label{oct416}
\begin{split}
	\Psi_\tau - &L\Psi + \frac{\gamma}{2}(1-\gamma) y e^{(\gamma - {1}/{2})\tau} \Psi\\
		&=\Big[ -a' - a (2\gamma -1)e^{\tau(2\gamma-1)}
			- \frac{y^2}{b} \Big(\frac{4}{b} - 1\Big)
			+ \Big(\frac{6}{b} - \frac{3}{2}\Big)
			+ \frac{\gamma(1-\gamma)}{2} y e^{(\gamma - {1}/{2})\tau} \Big] \Psi.
\end{split}
\end{equation}
It is clear that to have (\ref{eq:chris21}), 
we must choose $b<4$.  For simplicity, we take $b = 2$, and
\[
a' = \frac{6}{b} - \frac{3}{2} = \frac32,
\]
so that (\ref{oct416}) becomes
\[ 
	\Psi_\tau - L\Psi + \frac{\gamma}{2}(1-\gamma) y e^{(\gamma - {1}/{2})\tau} \Psi
		=\Big[ - a (2\gamma -1)e^{\tau(2\gamma-1)}
			+ \frac{\gamma(1-\gamma)}{2} y e^{(\gamma - {1}/{2})\tau}
			- \frac{y^2}{2}
 \Big] \Psi.
\]
The choice 
\[
a \geq \frac{\gamma^2(1-\gamma)^2}{8(2\gamma-1)}
\]
ensures that  (\ref{eq:chris21}) holds.
Returning to our original variables, we see that
\begin{align*}
&v(t,x)
	= \tilde v(t,2t + (t+1)^\gamma - 1 + x)
	= e^{-x} \zeta(\log(1+t),(t + 1)^{-1/2}x)\\
& = e^{-x}e^{-\alpha (t + 1)^{-1/2}x (1+t)^{ \gamma-1/2 }} \psi(\log(1+t),(t + 1)^{-1/2}x)\\
& = \frac{1}{1+t}\exp\Big\{-x - \frac{\gamma}{2}x \Big(1+t\Big)^{ \gamma-1} - 
(1+t)^\gamma - \frac{\gamma^2(1+t)^{2\gamma - 1}}{4(2\gamma - 1)} \Big\} \Psi(\log(1+t),(t +1)^{-1/2}x)\\
&= \frac{x}{(1+t)^3}\exp\Big\{-x - \frac{\gamma}{2} x (1+t)^{ \gamma-1} - 
(1+t)^\gamma - \Big[\frac{\gamma^2}{4(2\gamma - 1)} + a\Big](1+t)^{2\gamma - 1}  - \frac{x^2}{2(1+t)} \Big\}.
\end{align*}
This concludes the proof.

\end{proof}

\section{The Fisher-KPP equation with a Gompertz non-linearity}\label{sec:local_equation}

A side effect of our analysis gives the asymptotics for a related local equation:
\begin{equation}\label{eq:local}
	u_t - \Delta u = f_r(u).
\end{equation}
Here, we assume that $f_r \in C^1$, $r \in (1,\infty)$,
and there exist positive constants $\theta_f$, $\delta_f$, and $A_f$ such that
\begin{equation}
f_r(0) = 0, \quad
          f_r(u) >0 \text{ for all } u \in (0,\theta_f),
          \quad f_r(\theta_f) = 0,
        \quad f_r(u) = 0 \text{ for all } u \geq \theta_f,
\end{equation}
and
\begin{equation}\label{eq:local_f}      
u \Big( 1 - A_f \log\Big(\frac{1}{u}\Big)^{1-r} \Big)  \leq f_r(u)  \leq u \Big( 1 - A_f^{-1} \log\Big(\frac{1}{u}\Big)^{1-r} \Big), 
\end{equation}
for $u \in (0,\delta_f)$.
\begin{theorem}\label{thm:main_local_delay}
    Suppose that  the initial condition $u_0(x)$ for~\eqref{eq:local} 
    is as in~\eqref{eq:u_0}.
        If $r>3$, then the solution $u(t,x)$ propagates with a logarithmic delay:
    \begin{equation}\label{eq:local_log_delay_above}
        \lim_{L\to\infty} \limsup_{t\to\infty} \sup_{x \geq L} \, u\Big(t,2t - \frac{3}{2}\log t + x\Big) = 0,
    \end{equation}
    and
    \begin{equation}\label{eq:local_log_delay_below}
       \lim_{L\to\infty}\limsup_{t\to\infty}\sup_{x \leq -L} \Big|u\Big(t,2t - \frac{3}{2} \log t + x\Big) - \theta_f\Big| = 0.
    \end{equation}
If $ r= 3$,  $u(t,x)$ propagates with a larger logarithmic delay: there exists $S_A>s_A>3/2$ such that 
	\begin{equation}\label{eq:big_local_log_delay_below}
		\liminf_{t\to\infty} \sup_{x \leq 0 } \Big|u\Big(t,2t - S_A \log t + x\Big) - \theta_f\Big| = 0,
	\end{equation}
	and
	\begin{equation}\label{eq:big_local_log_delay_above}
		\lim_{t\to\infty} \sup_{x \leq 0 } u\Big(t,2t - s_A \log t + x\Big) = 0,
	\end{equation}
If $r \in (1,3)$, then the delay is algebraic: there exist $C_f > c_f >0$, depending only on $f_r$, such that
    \begin{equation}\label{eq:local_alg_delay_above}
        \lim_{t\to\infty} \sup_{x \geq 0} u\Big(t,2t - c_f t^\frac{3-r}{1+r} +x\Big) = 0,
    \end{equation}
    and
    \begin{equation}\label{eq:local_alg_delay_below}
        \lim_{t\to\infty}\sup_{x \leq 0} \Big|u\Big(t,2t - C_f t^\frac{3-r}{1+r} + x\Big) - \theta_f\Big| = 0.
    \end{equation}      
\end{theorem}

The proof of \eqref{eq:local_log_delay_above} follows directly from \Cref{sec:lower_bound}. The proofs of \eqref{eq:local_log_delay_below},~\eqref{eq:big_local_log_delay_below}, and \eqref{eq:local_alg_delay_below} follow from what was done in \Cref{sec:estw}, combined with a standard argument saying that the convergence is necessarily to the steady state $\theta_f$ (see, e.g., \cite{HNRR_Short}).  The bounds~\eqref{eq:big_local_log_delay_above} and \eqref{eq:local_alg_delay_above} need
additional ingredients; the work is similar so we only detail the computations for~\eqref{eq:local_alg_delay_above}. Indeed, since our non-linearity is local, we cannot ``pull'' information from the front as we did above when we used the value of $u$ at the front to bound $\phi \star u$ far ahead of the front.  In order to get around this, we state a weak lower bound on $u$.  

\begin{lemma}\label{lem:local_alg_lower}
Let the hypotheses of \Cref{thm:main_local_delay} be satisfied. Then there exists $\delta_f>0$, depending only on $f$, such that
    \[
        u(t,x) \geq \exp \{ - \delta_f t^{ \gamma }\}
    \]
for all $t$ sufficiently large and all $x \leq 2t + t^\gamma$, where we again define $\gamma = 2/(1+r)$.
\end{lemma}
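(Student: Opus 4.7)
The strategy follows the two-step pattern of \Cref{sec:lower_bound}: first a sub-solution with a moving Dirichlet boundary produces a pointwise lower bound at the leading edge of the front, and then a travelling-wave comparison propagates this bound into the bulk.

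For the first step, observe that the lower bound in \eqref{eq:local_f} reads $f_r(u) \geq u\bigl(1 - A_f \log(1/u)^{1-r}\bigr)$ for $u \in (0,\delta_f)$, which is precisely the form used in the proof of \Cref{prop:g}(3). Thus, with $\tilde v$ as in \Cref{lem:sub-solution} and $a(t) := \exp\{-\beta t^{2\gamma - 1}\}$ for $\beta$ large enough depending only on $A_f$ and $\gamma$, the function $\underline v(t, x) := \delta a(t)\, \tilde v(t, x)$, for $\delta > 0$ small enough that $\underline v \leq \delta_f$, is a sub-solution of $u_t = u_{xx} + f_r(u)$ on the moving domain $\{t \geq 2,\ x > 2t + (t+1)^\gamma - 1\}$. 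The initial comparison $\underline v(2, \cdot) \leq u(2, \cdot)$ follows from the heat-kernel argument of \eqref{eq:w1_lower_bound}, since $f_r \geq 0$ on $[0, \theta_f]$ makes $u$ a super-solution of the heat equation. The parabolic maximum principle then yields $u \geq \underline v$ on the moving domain, and evaluating at $x = 2t + t^\gamma + \sqrt{t}$ produces
\[
u\bigl(t,\, 2t + t^\gamma + \sqrt{t}\bigr) \geq C_1^{-1} \exp\bigl\{- t^\gamma - C_1 t^{2\gamma - 1}\bigr\},
\]
for $t$ large and some $C_1 > 0$ depending only on $f_r$ and $u_0$.

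For the second step, let $V$ be a monotone travelling wave of speed $2$ for \eqref{eq:local}, that is, $-2V' = V'' + f_r(V)$, $V(-\infty) = \theta_f$, $V(+\infty) = 0$, with the classical asymptotic $V(\xi) \sim \kappa \xi e^{-\xi}$ as $\xi \to +\infty$ (see \cite{BerestyckiNirenberg, Hamel}). Define
\[
v(t, x) := V\bigl(x - 2t + 2 C_1\, t^{2\gamma - 1} + s_0\bigr),
\]
with $s_0 > 0$ to be chosen. A direct computation gives $v_t - v_{xx} - f_r(v) = 2 C_1 (2\gamma - 1)\, t^{2\gamma - 2}\, V' \leq 0$, since $V' \leq 0$ and $\gamma > 1/2$. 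At the right boundary $x = 2t + t^\gamma + \sqrt{t}$, the asymptotic of $V$ together with Step 1 give $v \leq u$ once $t$ is large, thanks to the extra factor $e^{-\sqrt{t}}$. For the initial comparison at a large time $T$, one chooses $s_0$ large: the Fisher-KPP spreading result of \cite{HamelRyzhik} gives $u(T, \cdot) \geq \theta_f - \eta$ on $(-\infty, (2-\eta)T]$ for any small $\eta > 0$, and in the transition zone the Step 1 bound together with the monotonicity of $V$ suffice. The parabolic maximum principle then yields $u \geq v$ on $\{t \geq T,\ x \leq 2t + t^\gamma + \sqrt{t}\}$, and for $x \leq 2t + t^\gamma$ monotonicity of $V$, its asymptotic $V(\xi) \sim \kappa \xi e^{-\xi}$, and the fact that $t^{2\gamma - 1} = o(t^\gamma)$ (since $r > 1$) give
\[
u(t, x) \geq V\bigl(t^\gamma + 2 C_1\, t^{2\gamma - 1} + s_0\bigr) \geq \exp\bigl\{-\delta_f\, t^\gamma\bigr\}
\]
for a suitable $\delta_f > 0$ and all $t$ large.

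The main obstacle is the bookkeeping in Step 2: matching the $O(t^{2\gamma-1})$ correction from Step 1 with the shift $2 C_1 t^{2\gamma - 1}$ in $v$ at the right boundary, and using classical Fisher-KPP propagation to dominate $v$ behind the front. The adaptation of \Cref{prop:g}(3) to \eqref{eq:local} in Step 1 is essentially a verbatim repetition of the argument in \Cref{sec:estw}, with $g(t, u)$ replaced by the simpler upper bound $A_f \log(1/u)^{1-r}$, and introduces no new ideas.
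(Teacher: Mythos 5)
The paper omits the proof of this lemma entirely, remarking only that it ``follows from the analysis of the lower bound in part (3) of \Cref{prop:g} and requires no new ideas,'' so your two-step plan (edge sub-solution with moving Dirichlet boundary, then a travelling-wave comparison to fill in $x\le 2t+t^\gamma$) is exactly the intended route, and your Step 1 is a correct transplant of \Cref{lem:sub-solution} and Section 4.3 to the local nonlinearity $g(u)=A_f\log(1/u)^{1-r}$.

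Step 2, however, has a genuine gap in the choice of comparison wave. You take $V$ to be the actual travelling wave of \eqref{eq:local}, with $V(-\infty)=\theta_f$. Then the ordering $v\le u$ cannot be arranged on the left: behind the front the spreading result only gives $u\ge\theta_f-\eta$ (and $u$ may be strictly below $\theta_f$ everywhere, e.g.\ if $u_0<\theta_f$), while $v(t,x)\to\theta_f$ as $x\to-\infty$, so $v>u$ far to the left; the same problem persists if you truncate the comparison domain at a left boundary $x=(2-\eta)t$, where $v=V(-\eta t+o(t))\to\theta_f$ as $t\to\infty$. This is precisely the issue the paper's own \Cref{lem:compTW} is designed around: there $V$ is taken to be the wave of the modified Gompertz equation $-2V'=V''+V(1-A_V\log(M/V)^{1-r})$, whose plateau $Me^{-A_V^{1/(r-1)}}$ can be pushed below $\liminf_{x\to-\infty}u$ by increasing $A_V$, and which (for $A_V\ge A_f$ suitably large, using \eqref{eq:local_f}) is still a sub-solution of $u_t=u_{xx}+f_r(u)$ on its whole range. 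With that replacement — or any sub-nonlinearity whose wave has an adjustable low plateau — your argument closes, and since the lemma only requires the crude bound $e^{-\delta_f t^\gamma}$, the asymptotics $V(\xi)\sim\kappa\xi e^{-\xi}$ could even be weakened to $V(\xi)\ge e^{-2\xi}$. A minor point: the spreading result you should invoke for the local equation is the classical KPP one (Aronson--Weinberger/Uchiyama), not \cite{HamelRyzhik}, which concerns the non-local model.
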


Such a bound follows from the analysis of the lower bound in part (3) of \Cref{prop:g} and requires no new ideas.  As such, we omit the proof. 

The main point in the proof of \Cref{thm:main_local_delay} is to use the lower bound in \Cref{lem:local_alg_lower} on $u$ along with 
the form of the non-linearity to replace the estimate of $\phi \star u$ that we used in the proof of the upper bound in \Cref{thm:main_delay} when $r \in (1,3)$.


\begin{proof}[Proof of~\eqref{eq:local_alg_delay_above} assuming \Cref{lem:local_alg_lower}]

We use a super-solution
\[
\overline v(t,x) := B \exp \Big\{ - \Big( x - 2t + 2c_f t^{2\gamma-1}\Big)\Big\},
\]
with $c_f>0$ to be determined.  Then $\overline v$ satisfies
\[
    \overline v_t = \overline v_{xx} + \overline v\Big(1 - 2c_f (2\gamma-1) t^{2\gamma-2}\Big).
\]
On the other hand, using the bound on $f$~\eqref{eq:local_f} along with \Cref{lem:local_alg_lower}, we have that, for all $t$ sufficiently large and $x \leq 2t + t^\gamma$, 
\[\begin{split}
    u_t - u_{xx} = f_r(u)
        &\leq u \Big(1 - A_f \log\Big(\frac{1}{u(t,x)}\Big)^{1-r}\Big)
        \leq u\Big(1 - A_f \delta_f^{1-r} t^{\gamma(1-r)}\Big).
\end{split}\]
Recalling $2\gamma-2= \gamma(1-r)$, and choosing $c_f$ such that $A_f \delta_f^{r-1} \geq 2 c_f (2\gamma-1)$, we see that $\overline v$ is 
a super-solution for $u$.  
\end{proof}

\section{The local-in-time Harnack inequality: \Cref{lem:Harnack}}\label{sec:harnack}



\begin{proof}[Proof of \Cref{lem:Harnack}]
Up to a shift in time, we may assume that $t = 0$.
We may also assume that $c\equiv 0$.  Indeed, let 
\[
u_{\pm}(t,x) = e^{\pm t\|c\|_{L^\infty([0,T]\times\R)}} w (t,x),
\]
where $w$ solves the heat equation
\[ 
w_t = w_{xx}, 
\]
with the initial condition $w(t=0,x) = u(t=0,x)$.
Then $u_+$ is a super-solution to $u$ while $u_-$ is a sub-solution to $u$.  
Hence, we have 
\[
    \frac{u(T,x+y)}{\|u_-\|_{L^\infty}^{1-1/p} u(T,x)^{1/p}}
        \leq \frac{u_+(T,x+y)}{\|u_-\|_{L^\infty}^{1-1/p} u_-(T,x)^{1/p}}
        \leq e^{2 \|c\|_{L^\infty} T} \frac{w(T,x+y)}{\|w\|_{L^\infty}^{1-1/p} w(T,x)^{1/p}}.
\]
In view of this inequality, it is enough to prove the claim for $w$, that is, solutions to the heat equation.

Let $G$ be the one-dimensional heat kernel $G(t,x) = (4\pi t)^{-1/2}e^{-x^2/(4t)}$.  Fix $s = (p+1)/2p$, notice that $s \in (0,1)$ and $sp > 1$, and let $q$ be the dual exponent of $p$.  Then we have  
\begin{equation}\label{eq:Harnack1}
\begin{split}
    w(T,x+y)
        &= \int_\R w(0,z) G(T,x+y-z) dz\\
        &\leq \|w\|_\infty^{1-1/p} \int_\R 
        w(0,z)^{1/p} G(T,x+y-z)^s G(T,x+y-z)^{1-s} dz\\
        &\leq \|w\|_{\infty}^{1-1/p} \Big(\int_\R w(0,z) G(T,x+y-z)^{sp}dz \Big)^{1/p} \Big\|G^{1-s}(T,\cdot)\Big\|_q\\
        &\leq C_p T^{\frac{s}{2} - \frac{1}{2p}}
         \|w\|_{\infty}^{1-1/p} \Big(\int_\R w(0,z) G(T,x+y-z)^{sp}dz \Big)^{1/p}.
\end{split}
\end{equation}
Above, we have used that
\begin{align*}
\Big\|G^{1-s}(T,\cdot)\Big\|_q &= (4\pi T)^{-\frac12 (1-s)} \left( \int_{\R^n} e^{-q(1-s)\frac{x^2}{4T}}\, dx \right)^{\frac{1}{q}}\\&= (4\pi T)^{-\frac12 (1-s)} \left( \left( \frac{4T\pi}{q(1-s)} \right)^\frac12 \right)^{\frac{1}{q}} = C_p T^{ \frac{1}{2q} - \frac12 (1-s)}= C_p T^{ \frac{s}{2} - \frac{1}{2p}}.
\end{align*}
We now seek a bound on $G(T,x+y-z)^{sp}$ in terms of $G(T,x-z)$.  
To this end, we recall that~$sp > 1$, let $x' = x-z$ and we compute
\[\begin{split}
    \frac{G(T,x' + y)^{sp}}{G(T,x')}
        &= (4\pi T)^{\frac{(1 - sp)}{2}} 
        \exp \Big\{ - \frac{sp (x'+y)^2}{4T} + \frac{|x'|^2}{4T} \Big\}\\
        &= (4\pi T)^{\frac{(1 - sp)}{2}} \exp \Big\{ - \frac{sp |x'|^2}{4T} - \frac{sp x'y}{2T} - \frac{spy^2}{4T} + \frac{|x'|^2}{4T} \Big\}\\
        &= (4\pi T)^{\frac{(1 - sp)}{2}} \exp \Big\{ - \frac{(sp -1) |x'|^2}{4T} - \frac{sp x'y}{2T} - \frac{spy^2}{4T}\Big\}\\
        &\leq (4\pi T)^{\frac{(1 - sp)}{2}} \exp \Big\{ - \frac{(sp -1) |x'|^2}{4T} + \frac{(sp-1)|x'|^2}{4T} + \frac{(sp)^2 y^2}{4T(sp-1)} - \frac{spy^2}{4T}\Big\}\\
        &= (4\pi T)^{\frac{(1 - sp)}{2}} \exp \Big\{ \frac{(sp)^2 y^2}{4T(sp-1)} - \frac{spy^2}{4T}\Big\}
        = (4\pi T)^{\frac{(1 - sp)}{2}} \exp \Big\{ \frac{spy^2}{4T (sp-1)}\Big\}.
\end{split}\]
Define $\beta = \frac{sp}{4(sp-1)}$.  Using the above bound in~\eqref{eq:Harnack1}, we obtain
\[\begin{split}
    w(t,x+y)
        &\leq C_p e^{\frac{\beta y^2}{T}} T^{ \frac{s}{2} - \frac{1}{2p} + 
        \frac{(1-sp)}{2p}} \|w\|_{\infty}^{1-1/p} \Big(\int_\R w(0,z) G(T,x-z)dz \Big)^{1/p}\\
        &=C e^{{\beta y^2}/{T}} \|w\|_{\infty}^{1-1/p} w(t,x)^{1/p}.
\end{split}\]
In the second line we used the explicit choice of $s$ to simplify the exponent of $T$.  This concludes the proof.
\end{proof}

\appendix

\numberwithin{equation}{section}

\section{Proofs of \Cref{lem:r3_supersolution,lem:estperturbative,lem:estperturbativeeps}.}
\label{sec:lemestpert}

\begin{proof}[{Proof of \Cref{lem:estperturbative}}]
The proof of this lemma is similar to that of a corresponding estimate 
in~\cite{HNRR_Short}. 
However, the proof there only deals with moving boundary conditions of the form $2t + r\log(t)$.  Hence, for completeness, we provide a streamlined proof.  Recall that $\bar\zeta$ solves
\[
\bar\zeta_\tau = L\bar\zeta + \eps e^{(\gamma-1/2)\tau}  \bar\zeta_y. 
\]
To rectify the fact that the operator $L$ is not self-adjoint, 
we remove a Gaussian term. Let
\[
\bar\zeta(\tau, y) = e^{-y^2/8} \zeta^*(\tau, y),
\]
then $\zeta^*$ satisfies
\begin{equation}\label{eq:self_similar_equation}
	 \zeta_\tau^* + M \zeta^* =  \eps e^{(\gamma-1/2)\tau} 
	 \Big(\bar\zeta_y^*  - \frac{y}{4}\bar\zeta^* \Big),
\end{equation}
where 
\[
M\zeta^*:= -\zeta_{yy}^* + \Big(\frac{y^2}{16} - \frac34 \Big)\zeta^*.
\]
The principle eigenvalue of $M$ 
is associated to the eigenfunction 
\[
\psi(y) := (2\sqrt\pi)^{-1/2} y e^{-y^2/8}.
\]
%
Define the non-negative quadratic form 
\begin{equation*}
Q(f) := \langle Mf, f\rangle =  \int_\R \Big( f_y^2 + \Big( \frac{y^2}{16} - \frac{3}{4}\Big)f^2 \Big)dy,
\end{equation*}
for all $f \in H^1(0,\infty)$ such that $yf \in L^2(\R^+)$. 

Multiplying~\eqref{eq:self_similar_equation} by $\zeta^*$ and integrating, we obtain
\[\begin{split}
	\partial_\tau \,\Vert \zeta^* \Vert_{L^2(\R^+)}^2 + 2Q(\zeta^*)
		&= - 2\eps e^{(\gamma-1/2)\tau} 
		\int_0^\infty \frac{y}{4}(\zeta^*)^2 dy \leq 0.
\end{split}\]
Hence $\zeta^*$ is bounded uniformly in $L^2$ independently of $\tau$.
%
Next, let $\zeta_1^* = \langle \psi, \zeta^*\rangle$.  We have  
\begin{equation}\label{eq:bar_zeta_1}
|\partial_\tau \zeta_1^*|
		\leq \eps e^{-(\gamma-1/2)\tau}
		( \vert \langle (-\psi_y), \zeta^*\rangle\vert + \vert  \zeta_1^*\vert ) 
		\lesssim \eps e^{-(\gamma-1/2)\tau} \|\zeta^*\|_2.
\end{equation}
Integrating this inequality in $\tau$ and using the $L^2$ bound above, we obtain
\begin{equation}\label{eq:chris26}
	\vert \zeta_1^*(\tau) - \zeta_1^*(0) \vert
		\lesssim \eps \|\bar\zeta(0,\cdot)\|_2.
\end{equation}
We now show that the component of $\zeta^*$ that is orthogonal to $\psi$ decays in time.  Let 
\[
\zeta^{*\perp} := \zeta^* - \zeta_1^* \psi,
\]
then 
\[
Q(\zeta^{*\perp}) \geq \frac12 \|\zeta^{*\perp}\|_2^2.
\]
 Using~\eqref{eq:self_similar_equation}, we   obtain  
\begin{equation*}
	\partial_\tau \Vert \zeta^{*\perp} \Vert_{L^2(\R^+)}^2 + 2Q(\zeta^{*\perp}) 
		\lesssim \eps e^{-(\gamma-1/2)\tau}\|\zeta^{*}(0,\cdot)\|_2\|\zeta^{*\perp}\|_2,
\end{equation*}
%
from which we deduce that 
\[
	\|\bar\zeta^\perp(\tau,\cdot)\|_2 \lesssim e^{-(\gamma-1/2) \tau} \|\bar\zeta(0,\cdot)\|_2.
\]
Gathering all estimates concludes the proof.
%
\end{proof}

\begin{proof}[{Proof of \Cref{lem:estperturbativeeps}}]

Recall that $\bar\zeta$ solves
\[
	\bar\zeta_\tau = L\bar\zeta + \frac{1}{2} \bar\zeta_y. 
\]
To pass to a self-adjoint form, write
\[
	\bar\zeta(\tau,y) = \exp\Big\lbrace - \frac{y^2}{8} - \frac{ y}{4} \Big\rbrace  \zeta^*(\tau,y),
\]
so that $\zeta^*$ solves
\[
	\zeta_\tau^* + M_1 \zeta^* = 0,
\]
where, 
\[
M_1 \zeta^* := -\zeta_{yy}^* + \Big[ \Big(\frac{y^2}{16} - \frac34 \Big) +  \Big(\frac{y}{8} + \frac{1}{16}\Big) \Big] \zeta^* = M\zeta^* + \Big(\frac{y}{8} + \frac{1}{16}\Big) \zeta^*.
\]
This operator is now self-adjoint with a compact resolvent. 
Let $\psi$ and $\lambda$ be the principal eigenfunction and eigenvalue of the operator above satisfying the boundary condition $\psi(0) = 0$ and the normalisation $\Vert \psi \Vert_{L^2(\R^+)} = 1$.  

Observe that
\begin{equation*}
Q(f) := \langle M f, f\rangle = \langle Mf, f\rangle + \Big\langle \Big(\frac{y}{8} + \frac{1}{16}\Big)f, f \Big\rangle \geq 0,
\end{equation*}
and thus $\lambda > 0$. 

Write 
\[
\zeta^* := \langle \psi, \zeta^*\rangle\psi + \zeta^{*\perp},
\]
so that
\begin{equation}\label{oct502}
Q(\zeta^{*\perp}) \geq \mu \|\zeta^{*\perp}\|_2^2,
\end{equation}
where $\mu$ is the second eigenvalue of $M$.
After a time differentiation we have   
\[
\langle \psi_\eps, \zeta^*\rangle(\tau) =\langle \psi, \zeta^*\rangle(0) e^{-\lambda \tau},
\]
and
%
%
%
%
%
as a consequence of (\ref{oct502}), 
that 
\[
\|\zeta^{*\perp}\|_2 (\tau) \leq \|\zeta^{*\perp}\|_2 (0) e^{-\mu \tau}.
\]
Then, locally we have $\|\zeta^{*\perp}\|_\infty (\tau) \lesssim e^{-\mu \tau}$ by parabolic regularity.
%
This yields 
\begin{equation}\label{eq:c2}
	\bar\zeta
		:= \exp\Big\lbrace - \frac{y^2}{8} - \frac{y}{4} \Big\rbrace \Big( \Big(\int_{\R^+} \psi(y) \exp\Big\lbrace  \frac{y^2}{8} + \frac{y}{4} \Big\rbrace \bar\zeta(0,y) \, dy \Big) \psi(y)  e^{-\lambda \tau} + \bar h(\tau,y) e^{-\mu \tau}\Big),
\end{equation}
where $\bar h$ is bounded in $\tau$, locally in $y$.  To finish, we simply note that, by elliptic regularity theory, for any compact set $K\subset [0,\infty)$, there exists $C_K>0$ such that
\[
	\frac{y}{C_K^{-1}}
		\leq \psi(y)
		\leq C_K y
		\qquad\text{ for all } y \in K
\]
since $\psi(0) = 0$.  This concludes the proof.
%

\end{proof}

\begin{proof}[Proof of \Cref{lem:r3_supersolution}]

As before, we pass to the moving frame and remove an exponential: let
\[
	v(t,x) = e^{-x +(2t - \tilde s_\phi\log(t+t_0))}\overline v(t,x-(2t - \tilde s_\phi\log(t+t_0))).
\]
Then, we want
\[
	\bar v_t
		\geq \bar v_{xx} - \bar v_x \frac{\tilde s_\phi}{t + t_0} + \bar v\left( \frac{\tilde s_\phi}{t+t_0} - \nu\right).
\]
Changing now to self-similar variables and recalling the definition $\e = t_0^{-1/2}$, let
\[\begin{split}
	&\tau = \log\left( 1 + \frac{t}{t_0}\right),
		\quad y = \frac{x}{\sqrt{t+t_0}},
		\quad \zeta(\tau, y) = \bar v\left( t_0 (e^\tau - 1), t_0^{1/2} e^{\tau/2} y\right),\\
		&\text{ and } \quad
		N(\tau,y) := t_0 e^\tau \bar \nu\left( t_0 (e^\tau - 1), t_0^{1/2} e^{\tau/2} y\right)
			= \frac{1}{CA_\phi \left(y + \e e^{-\tau/2}((S_\phi - \tilde s_\phi) (\tau + \log(t_0)) + L) \right)^2}.
\end{split}\]
The important point here is that $N(\infty,\cdot)$ is neither infinity nor zero as it would be for another choice of $r$.  This is what induces the larger delay.

Then we must find $\zeta$ that satisfies:
\[
	\zeta_\tau
		- L\zeta + \left(1 - \tilde s_\phi + N(0,y)\right)\zeta
		\geq \tilde s_\phi \e e^{-\tau/2} \zeta_y
			+ \zeta\left(N(0,y) - N(\tau,y)\right).
\]
We have changed the order to emphasize that the right hand side is a small error.

Let $\bar \zeta = e^{-\tau/2} e^{y^2/8} \zeta$.  The above yields
\begin{equation}\label{eq:edge_bar_zeta}
	\bar \zeta_\tau
		+ M \bar \zeta + \left(\frac{3}{2} - \tilde s_\phi + N(0,y)\right)\zeta
		\geq \tilde s_\phi \e e^{-\tau/2} \bar\zeta_y
			-  \frac{y}{4} \tilde s_\phi \e e^{-\tau/2} \bar \zeta
			+ \bar\zeta\left(N(0,y) - N(\tau,y)\right).
\end{equation}
We now define $\bar \zeta$.  As above $M + N(0,\cdot)$ is self-adjoint with a compact resolvent.  Let $\bar \zeta_0$ and $\lambda_0$ be its principle eigenelements.  Using the Rayleigh quotient and testing with $ye^{-y^2/8}$, we see immediately that $\lambda_0 > 0$.  We need only verify that $\bar \zeta$ satisfies~\eqref{eq:edge_bar_zeta}; indeed, setting $\tilde s_\phi = 3/2 + \lambda_0/2$,
\[\begin{split}
	\bar \zeta_\tau
		&+ M \bar \zeta + \left(\frac{3}{2} - \tilde s_\phi + N(0,y)\right)\zeta
			- \tilde s_\phi \e e^{-\tau/2} \bar\zeta_y
			+ \frac{y}{4} \tilde s_\phi \e e^{-\tau/2} \bar \zeta
			- \bar\zeta\left(N(0,y) - N(\tau,y)\right)\\
		&= \frac{\lambda_0}{2}\bar \zeta
			- \tilde s_\phi \e e^{-\tau/2} \bar\zeta_y
			+ \frac{y}{4} \tilde s_\phi \e e^{-\tau/2} \bar \zeta
			- \bar\zeta\left(N(0,y) - N(\tau,y)\right).
\end{split}\]
The first, third, and fourth terms are all positive.  A simple maximum principle argument yields that $- \bar \zeta_y$ is positive for all $y \geq y_0$ for some $y_0>0$.  The Hopf maximum principle implies $(\lambda_0/2)\bar\zeta - \tilde s_\phi \e e^{-\tau/2} \bar \zeta_y > 0$ for all $y > e^{-\tau/2}/C$ for some $C>0$.  Thus, $\bar \zeta$ satisfies~\eqref{eq:edge_bar_zeta} on $[e^{-\tau/2}/C,\infty)$, which, after translating back to physical variables, concludes the proof. 

\end{proof}

  \bibliographystyle{abbrv}
  \bibliography{refs}

\begin{thebibliography}{10}

\bibitem{BNPR}
H.~Berestycki, G.~Nadin, B.~Perthame, and L.~Ryzhik.
\newblock The non-local {F}isher-{KPP} equation: travelling waves and steady
  states.
\newblock {\em Nonlinearity}, 22(12):2813--2844, 2009.

\bibitem{BerestyckiNirenberg}
H.~Berestycki and L.~Nirenberg.
\newblock Travelling fronts in cylinders.
\newblock {\em Ann. Inst. H. Poincar\'e Anal. Non Lin\'eaire}, 9(5):497--572,
  1992.

\bibitem{BouinHendersonForthcoming}
E.~Bouin and C.~Henderson.
\newblock Forthcoming.

\bibitem{BHR_Log_Delay}
E.~Bouin, C.~Henderson, and L.~Ryzhik.
\newblock The {B}ramson logarithmic delay in the cane toads equations.
\newblock {\em Preprint}, 2016.
\newblock https://arxiv.org/abs/1610.03285.

\bibitem{Bramson78}
M.~Bramson.
\newblock Maximal displacement of branching {B}rownian motion.
\newblock {\em Comm. Pure Appl. Math.}, 31(5):531--581, 1978.

\bibitem{Bramson83}
M.~Bramson.
\newblock Convergence of solutions of the {K}olmogorov equation to travelling
  waves.
\newblock {\em Mem. Amer. Math. Soc.}, 44(285):iv+190, 1983.

\bibitem{Britton}
N.~F. Britton.
\newblock Spatial structures and periodic travelling waves in an
  integro-differential reaction-diffusion population model.
\newblock {\em SIAM J. Appl. Math.}, 50(6):1663--1688, 1990.

\bibitem{dennis2006estimating}
B.~Dennis, J.~M. Ponciano, S.~R. Lele, M.~L. Taper, and D.~F. Staples.
\newblock Estimating density dependence, process noise, and observation error.
\newblock {\em Ecological Monographs}, 76(3):323--341, 2006.

\bibitem{Ducrot}
A.~Ducrot.
\newblock On the large time behaviour of the multi-dimensional {F}isher-{KPP}
  equation with compactly supported initial data.
\newblock {\em Nonlinearity}, 28(4):1043--1076, 2015.

\bibitem{FangZhao}
J.~Fang and X.-Q. Zhao.
\newblock Monotone wavefronts of the nonlocal {F}isher-{KPP} equation.
\newblock {\em Nonlinearity}, 24(11):3043--3054, 2011.

\bibitem{FangZeitouni}
M.~Fang and O.~Zeitouni.
\newblock Slowdown for time inhomogeneous branching {B}rownian motion.
\newblock {\em J. Stat. Phys.}, 149(1):1--9, 2012.

\bibitem{FayeHolzer}
G.~Faye and M.~Holzer.
\newblock Modulated traveling fronts for a nonlocal {F}isher-{KPP} equation: a
  dynamical systems approach.
\newblock {\em J. Differential Equations}, 258(7):2257--2289, 2015.

\bibitem{GenieysVolpertAuger}
S.~Genieys, V.~Volpert, and P.~Auger.
\newblock Pattern and waves for a model in population dynamics with nonlocal
  consumption of resources.
\newblock {\em Math. Model. Nat. Phenom.}, 1(1):65--82, 2006.

\bibitem{Gourley}
S.~A. Gourley.
\newblock Travelling front solutions of a nonlocal {F}isher equation.
\newblock {\em J. Math. Biol.}, 41(3):272--284, 2000.

\bibitem{Hamel}
F.~Hamel.
\newblock Qualitative properties of monostable pulsating fronts: exponential
  decay and monotonicity.
\newblock {\em J. Math. Pures Appl. (9)}, 89(4):355--399, 2008.

\bibitem{HamelHenderson}
F.~Hamel and C.~Henderson.
\newblock Propagation in a {F}isher-{KPP} equation with non-local advection.
\newblock {\em preprint}, 2017.
\newblock https://arxiv.org/abs/1707.05710.

\bibitem{HNRR_Short}
F.~Hamel, J.~Nolen, J.-M. Roquejoffre, and L.~Ryzhik.
\newblock A short proof of the logarithmic {B}ramson correction in
  {F}isher-{KPP} equations.
\newblock {\em Netw. Heterog. Media}, 8(1):275--289, 2013.

\bibitem{HNRR_Periodic}
F.~Hamel, J.~Nolen, J.-M. Roquejoffre, and L.~Ryzhik.
\newblock The logarithmic delay of {KPP} fronts in a periodic medium.
\newblock {\em J. Eur. Math. Soc. (JEMS)}, 18(3):465--505, 2016.

\bibitem{HamelRyzhik}
F.~Hamel and L.~Ryzhik.
\newblock On the nonlocal {F}isher-{KPP} equation: steady states, spreading
  speed and global bounds.
\newblock {\em Nonlinearity}, 27(11):2735--2753, 2014.

\bibitem{MaillardZeitouni}
P.~Maillard and O.~Zeitouni.
\newblock Slowdown in branching {B}rownian motion with inhomogeneous variance.
\newblock {\em Ann. Inst. Henri Poincar\'e Probab. Stat.}, 52(3):1144--1160,
  2016.

\bibitem{NadinRossiRyzhikPerthame}
G.~Nadin, L.~Rossi, L.~Ryzhik, and B.~Perthame.
\newblock Wave-like solutions for nonlocal reaction-diffusion equations: a toy
  model.
\newblock {\em Math. Model. Nat. Phenom.}, 8(3):33--41, 2013.

\bibitem{NolenRyzhikRoquejoffre}
J.~Nolen, J.-M. Roquejoffre, and L.~Ryzhik.
\newblock Power-like delay in time inhomogeneous {F}isher-{KPP} equations.
\newblock {\em Comm. Partial Differential Equations}, 40(3):475--505, 2015.

\bibitem{NRR1}
J.~Nolen, J.-M. Roquejoffre, and L.~Ryzhik.
\newblock Refined long time asymptotics for {F}isher-{KPP} fronts.
\newblock {\em arXiv preprint arXiv:1607.08802}, 2016.

\bibitem{NRR2}
J.~Nolen, J.-M. Roquejoffre, and L.~Ryzhik.
\newblock Convergence to a single wave in the {F}isher-{KPP} equation.
\newblock {\em Chin. Ann. Math. Ser. B}, 38(2):629--646, 2017.

\bibitem{Penington}
S.~Penington.
\newblock The spreading speed of solutions of the non-local fisher-kpp
  equation.
\newblock {\em https://arxiv.org/abs/1708.07965}.

\bibitem{Roberts}
M.~I. Roberts.
\newblock A simple path to asymptotics for the frontier of a branching
  {B}rownian motion.
\newblock {\em Ann. Probab.}, 41(5):3518--3541, 2013.

\bibitem{Uchiyama}
K.~Uchiyama.
\newblock The behavior of solutions of some nonlinear diffusion equations for
  large time.
\newblock {\em J. Math. Kyoto Univ.}, 18(3):453--508, 1978.

\end{thebibliography}
\end{document}